\newcommand\blfootnote[1]{%
  \begingroup
  \renewcommand\thefootnote{}\footnote{#1}%
  \addtocounter{footnote}{-1}%
  \endgroup
}
\newcommand{\ddb}{\sqrt{-1}\partial\overline{\partial}}
\renewcommand{\[}{\begin{equation} \begin{aligned} }
      \renewcommand{\]}{\end{aligned} \end{equation}}
\renewcommand{\phi}{\varphi}
\newtheorem{thm}{Theorem}
\newtheorem{prop}[thm]{Proposition}
\newtheorem{lem}[thm]{Lemma}
\newtheorem{cor}[thm]{Corollary}
\theoremstyle{definition}
\newtheorem{defn}[thm]{Definition}
\author{Julius Ross, G\'abor Sz\'ekelyhidi}
\address{Mathematics, Statistics and Computer Science, University of
  Illinois at Chicago, Chicago IL, USA}
\email{julius@math.uic.edu}
\address{Department of Mathematics, University of Notre Dame, Notre
  Dame IN, USA}
\email{gszekely@nd.edu}
\title{Twisted K\"ahler-Einstein metrics}
\dedicatory{Dedicated to D. H. Phong on the occasion of his 65$^{th}$ birthday.} 
\date{}
\begin{document}
\begin{abstract}
We prove an existence result for twisted K\"ahler-Einstein
metrics, assuming an appropriate twisted K-stability condition. An
improvement over earlier results is that certain non-negative twisting
forms are allowed.  
\end{abstract}
\maketitle

\blfootnote{The first author is supported in part by NSF grants DMS-1707661 and DMS-1749447.  The second author is supported in part by NSF grant DMS-1350696}

\section{Introduction}
Let $M$ be a Fano manifold, together with a line bundle $T\to M$. 
Let $\beta\in c_1(T)$ be a smooth non-negative form that can be expressed
as an average
\[ \label{eq:bdecomp}\beta = \int_{|T|} [D]\, d\mu(D), \]
where $d\mu$ is a volume form on the linear system $|T|$. A typical
example is obtained if $|T|$ is basepoint free, and $\beta$ is the
pullback of the Fubini-Study metric under the corresponding map
$M\to\mathbf{P}^N$ (see \cite[Theorem 19]{LSz15}). More generally we could allow the divisors $D$ to
be in the linear system $|kT|$ for some $k > 1$, but for simplicity of
notation we will only consider the case $k=1$. 

Our goal is to study the existence of solutions to the equation
\[ \mathrm{Ric}(\omega) = \omega + \beta \]
on $M$. We necessarily have $\omega\in c_1(L)$, where $L=K^{-1}\otimes
T^{-1}$ in terms of the canonical bundle $K$ of $M$. We call a
solution $\omega$ of this equation a twisted 
K\"ahler-Einstein metric on $(M,\beta)$.  The main result is the following.
\begin{thm}\label{thm:main}
  There exists a twisted K\"ahler-Einstein metric on $(M,\beta)$ if
  $(M,\beta)$ is K-stable. 
\end{thm}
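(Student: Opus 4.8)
The plan is to prove Theorem~\ref{thm:main} by a smooth continuity method, reducing the problem at the final parameter to the twisted K-stability hypothesis, in the manner of the Chen--Donaldson--Sun theorem and its analogue along the smooth continuity path. Fix a K\"ahler form $\theta\in c_1(M)$ (which exists since $M$ is Fano) and consider the family of equations
\[
\mathrm{Ric}(\omega_t) = t(\omega_t+\beta) + (1-t)\theta,\qquad t\in[0,1].
\]
A cohomology computation shows that any solution lies in $c_1(L)$, that $t=1$ is exactly the twisted K\"ahler--Einstein equation on $(M,\beta)$, and that $t=0$ is the equation $\mathrm{Ric}(\omega_0)=\theta$, which is solvable in $c_1(L)$ by Yau's theorem. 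Writing the equation as a complex Monge--Amp\`ere equation with the fixed smooth data $\beta$ and $\theta$ absorbed into the right-hand side, openness of the solvable set inside $[0,1)$ follows from the implicit function theorem: along the path $\mathrm{Ric}(\omega_t)\ge t\omega_t+(1-t)\theta$ because $\beta\ge 0$, so for $t<1$ the linearised operator $\Delta_{\omega_t}+t$ is invertible (modulo the normalisation constant) by Lichnerowicz's eigenvalue estimate. Closedness on any interval $[0,T]$ with $T<1$ is the classical a priori estimate for this path: the coefficient $t$ is bounded away from $1$, so the $C^0$ estimate, and then the higher order estimates, go through unconditionally, exactly as in the untwisted case.

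The content of the proof is therefore to show that if the continuity path can be solved for all $t\in[0,1)$ then it can be solved at $t=1$. Pick $t_i\uparrow 1$ with solutions $\omega_{t_i}\in c_1(L)$. Since $\mathrm{Ric}(\omega_{t_i})\ge t_i\omega_{t_i}$ and the volume $\int_M\omega_{t_i}^n=c_1(L)^n$ is fixed, one has uniform diameter and volume non-collapsing bounds; the partial $C^0$ estimate --- in the form that only requires a lower Ricci bound --- then provides uniformly bounded Bergman-type embeddings of $M$ into a fixed projective space by sections of a power of $L$. Passing to a subsequence, $(M,\omega_{t_i})$ converges in the Gromov--Hausdorff sense, compatibly with these embeddings, to a normal $\mathbf{Q}$-Fano variety $W$ carrying a weak twisted K\"ahler--Einstein metric $\omega_\infty$ together with a limiting twisting current $\beta_\infty$, and $W$ is the central fibre of a test configuration $(\mathcal{M},\mathcal{L})$ for $(M,L)$ equipped with a degeneration of the twisting data. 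The main obstacle, as in Chen--Donaldson--Sun, is making this limiting object genuinely algebraic and, here additionally, showing that the possibly degenerate twisting passes to the limit in a controlled way; this is exactly where the representation $\beta=\int_{|T|}[D]\,d\mu(D)$ is used, since it lets one track the twisting through the degeneration as a weighted family of divisors rather than as an abstract current.

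To finish, one computes that the test configuration $(\mathcal{M},\mathcal{L})$ produced above has non-positive twisted Donaldson--Futaki invariant: this follows from monotonicity of the twisted Ding (or Mabuchi) functional along the path together with the fact that its slope along the degeneration tends to zero as $t_i\to 1$, while the twisting contribution to the Donaldson--Futaki invariant is computed, via the decomposition of $\beta$, as the average over $d\mu$ of the log Donaldson--Futaki invariants of the pairs $(M,D)$. The twisted K-stability of $(M,\beta)$ then forces the configuration to be a product, and the twisted analogue of the Bando--Mabuchi/Berndtsson uniqueness theorem identifies $W$ equivariantly with $M$, so $\omega_\infty$ is a smooth twisted K\"ahler--Einstein metric on $(M,\beta)$ and $t=1$ lies in the solvable set. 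I expect essentially all the difficulty to be concentrated in the second step --- the passage to an algebraic limit with controlled twisting --- while the role of the positivity assumption $\beta\ge 0$ is precisely the Ricci lower bound $\mathrm{Ric}(\omega_t)\ge t\omega_t$ that makes this compactness available, which is what allows non-negative, rather than strictly positive, twisting forms.
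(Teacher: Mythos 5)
Your overall architecture -- a smooth continuity method, the partial $C^0$-estimate under a one-sided Ricci bound, a normal Gromov--Hausdorff limit carrying a weak twisted KE metric, and the use of the decomposition $\beta=\int_{|T|}[D]\,d\mu(D)$ to algebraize the twisting -- matches the paper's strategy, and the variant continuity path you choose (scaling $\beta$ by $t$ rather than keeping $\beta$ at full strength with a decaying $(1-t)\alpha$ term) yields the same Ricci lower bound $\mathrm{Ric}(\omega_t)\geq t\omega_t$ and causes no real difference.

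However, the conclusion of your argument has a genuine gap, and it is precisely the point the paper advertises as its improvement over earlier work. You write that the limiting test configuration ``has non-positive twisted Donaldson--Futaki invariant,'' and that K-stability then forces it to be a product. This is not what the argument actually produces. What one gets on the limit $Z$ is the vanishing of a twisted Futaki invariant \emph{with respect to the Gromov--Hausdorff limit current $\gamma$}, which is not the same as the current $\lim_{t\to 0}\lambda(t)_*\beta$ appearing in the definition of $\mathrm{Fut}_\beta$ for the degeneration. Bridging this gap via the divisor decomposition introduces an error: approximating $\gamma$ by $\tfrac1K\sum[g_\infty(D_i)]$ and using Proposition~\ref{prop:gw} only yields special degenerations $\lambda_K$ with $\Vert\lambda_K\Vert^{-1}\mathrm{Fut}_\beta(M,\lambda_K)\leq \epsilon+O(1/K)$, not $\leq 0$. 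From ``arbitrarily small'' one cannot directly invoke K-stability; one would need \emph{uniform} K-stability, which the theorem does not assume. The paper's resolution is Lemma~\ref{lem:finite} and Corollary~\ref{cor:notKstable}: a GIT finiteness statement showing that the set of possible normalized twisted Futaki invariants of suitably rigid degenerations of fixed exponent is finite, so ``arbitrarily small positive'' already implies ``non-positive for some degeneration,'' hence not K-stable. Your proposal contains no substitute for this step, and without it the argument only recovers the older uniform-K-stability result of \cite{DSz,BBJ}.

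A secondary point: you assert that the limit $W$ ``is the central fibre of a test configuration $(\mathcal{M},\mathcal{L})$ for $(M,L)$ equipped with a degeneration of the twisting data,'' but this is itself nontrivial and is not what the paper actually establishes. The paper never produces a single test configuration degenerating $(M,\beta)$ to $(Z,\gamma)$; instead, for each finite collection of divisors it produces a special degeneration of $(M,D_1',\dots,D_r',D_1,\dots,D_K)$ whose twisted Futaki invariant is controlled, and then applies the finiteness argument. You should either prove the stronger algebraization statement you assert, or restructure the conclusion along the lines of the paper's approximation-plus-finiteness scheme.
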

We will define K-stability of the pair $(M,\beta)$ in
Section~\ref{sec:Kstability} below. Note that if $T$ is trivial, so
that $\beta=0$, then $L=K^{-1}$, and we are seeking a
K\"ahler-Einstein metric on $M$. In this case Theorem~\ref{thm:main} 
was proven by Chen-Donaldson-Sun~\cite{CDS} in solving the Yau-Tian-Donaldson
conjecture~\cite{Yau93, Tian97, D-Toric}. When $\beta\in c_1(M)$ is strictly positive,
Datar and the second author~\cite{DSz} showed a slightly weaker statement,
namely that if $(M,\beta)$ is K-stable, then for any $\epsilon > 0$
there is a solution of the equation $\mathrm{Ric}(\omega) = \omega +
(1+\epsilon)\beta$. This is more or less equivalent to replacing
``K-stable'' by ``uniformly K-stable'' in the statement of
Theorem~\ref{thm:main}. In much more generality, allowing positive
currents $\beta$, the result assuming uniform K-stability was also shown by
Berman-Boucksom-Jonsson~\cite{BBJ}, using very different
techniques. In the setting when $\beta\in c_1(M)$ is the current of
integration along a smooth divisor, the statement of
Theorem~\ref{thm:main} was also shown by
Chen-Donaldson-Sun~\cite{CDS}, where instead of twisted
K\"ahler-Einstein metrics, one considers K\"ahler-Einstein metrics
with cone singularities along the divisor.  Let us also remark
that it would be natural to extend Theorem~\ref{thm:main} to pairs
$(M,\beta)$ that admit automorphisms, using a suitable 
notion of K-polystability rather than K-stability. This would not
introduce substantial new difficulties, however in this paper we focus on
the case of no automorphisms to simplify the discussion.

In Section~\ref{sec:Kstability} we will give the definition of
K-stability of a pair $(M,\beta)$, which is similar to
log-K-stability~\cite{Li15} and twisted K-stability~\cite{Der14}. In 
the case when $\beta$ is the pullback of a positive form by a
map, stability of the pair is related to the stability of the map in
the sense of \cite{DR17}. We
then prove Theorem~\ref{thm:main} in Section~\ref{sec:mainproof} along
the lines of the argument in \cite{DSz}. An important simplification
of the prior arguments in Chen-Donaldson-Sun~\cite{CDS} as well as
\cite{Sz13_1, DSz} is provided by the work of the second author and
Liu~\cite{LiuSz} on Gromov-Hausdorff limits of K\"ahler manifolds with
only lower bounds on the Ricci curvature, rather than a two-sided
bound as in Donaldson-Sun~\cite{DS1}. An additional observation, given
in Corollary~\ref{cor:notKstable} below, allows us to obtain the
existence of a twisted K\"ahler-Einstein metric under the assumption
of K-stability, rather than the stronger uniform K-stability which
would follow more directly from the methods of \cite{DSz}. \\

\section{K-stability}\label{sec:Kstability}
Let $M, T, \beta$ be as in the introduction, and $L=K^{-1}\otimes
T^{-1}$. Note that since $M$ is Fano, the line bundles $T, L$ are
uniquely determined by $\beta$, given that $\beta\in c_1(T)$. 
In this section we discuss K-stability of $(M,\beta)$, and
prove some basic properties.
First we have the following definition, which agrees with that in
Tian~\cite{Tian97} when $T$ is the trivial bundle so that $\beta=0$. 
\begin{defn}
  A special degeneration for $(M,L)$ of exponent $r > 0$ consists of
  an embedding $M\subset\mathbf{P}^N$ using a basis of sections of
  $L^r$, together with a $\mathbf{C}^*$-action $\lambda$ on
  $\mathbf{P}^N$, such that the limit $\lim_{t\to 0} \lambda(t)\cdot
  M$ is a normal variety. 
\end{defn}

We will refer to a special degeneration by the $\mathbf{C}^*$-action
$\lambda$, leaving implicit the projective embedding of $M$ that is
also part of the data. 
Next, we define the Donaldson-Futaki invariant $DF(M,\lambda)$ in the
same way as in Donaldson~\cite{D-Toric}, in terms of the weights of
the action on the spaces of sections $H^0(M, L^{kr})$ as
$k\to\infty$. In addition we will need a differential geometric formula for the
Donaldson-Futaki invariant. For this let $Z=\lim_{t\to 0}
\lambda(t)\cdot M$. We can assume that the $S^1$-subgroup of $\lambda$
acts through $SU(N+1)$, and so we have a Hamiltonian function $\theta$
on $\mathbf{P}^N$ generating $\lambda$. 
\begin{prop}\label{prop:DFformula}
  Let $\omega$ denote the restriction of the Fubini-Study metric to
  $Z$. We then have
  \[ DF(M, \lambda) = -V^{-1}\int_Z \theta\,(n\mathrm{Ric}(\omega|_Z) -
    \hat{R}\omega)\wedge \omega^{n-1}, \]
  where $V$ is the volume of $Z$, and $\hat{R}$ is the average scalar
  curvature, so that the integral above is unchanged by adding a
  constant to $\theta$. 
\end{prop}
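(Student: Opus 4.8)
The plan is to carry out the classical Donaldson--Tian computation relating weight asymptotics to the Futaki invariant, paying attention to the fact that the central fibre $Z$ is only assumed normal. First I would set up the algebraic side. Let $\mathcal{X}\subset\mathbf{P}^N\times\mathbf{C}$ be the flat family obtained as the closure of $\{(\lambda(t)\cdot x,t):x\in M,\ t\in\mathbf{C}^*\}$; its central fibre is $\mathcal{X}_0=Z$ and its generic fibre is $(M,L^r)$, so by flatness $d_k:=\dim H^0(M,L^{kr})=\dim H^0(Z,\mathcal{O}_Z(k))$, and the total weight $w_k$ of the $\mathbf{C}^*$-action appearing in the definition of $DF(M,\lambda)$ is the total weight of the induced action on $H^0(Z,\mathcal{O}_Z(k))$ (note that $\mathcal{O}_Z(1)$ is ample on $Z$, so $\chi(Z,\mathcal{O}_Z(k))=d_k$ for $k\gg 0$, and the weights of the higher cohomology vanish). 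For $k\gg 0$ these are polynomials $d_k=a_0k^n+a_1k^{n-1}+O(k^{n-2})$ and $w_k=b_0k^{n+1}+b_1k^n+O(k^{n-1})$, and $DF(M,\lambda)$ is a fixed expression in these coefficients (in Donaldson's normalisation, proportional to $a_0^{-2}(b_0a_1-b_1a_0)$). So everything reduces to identifying $a_0,a_1,b_0,b_1$ in terms of $(Z,\omega,\theta)$.

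Second, assume for the moment that $Z$ is smooth. Since the $S^1$-subgroup of $\lambda$ acts through $SU(N+1)$ it preserves $\omega_{FS}$, so $\omega=\omega_{FS}|_Z$ is an $S^1$-invariant Kähler metric, $\theta$ is a Hamiltonian for the action, and the $SU(N+1)$ condition fixes the additive constant in $\theta$ --- equivalently it fixes an $S^1$-linearisation of $\mathcal{O}_{\mathbf{P}^N}(1)$ whose equivariant first Chern form restricts to $\omega+s\,\theta$ on $Z$, with $s$ the coordinate on $\mathrm{Lie}(S^1)$. Ordinary Hirzebruch--Riemann--Roch gives $a_0=\tfrac1{n!}\int_Z\omega^n$ and $a_1=\tfrac1{2(n-1)!}\int_Z\mathrm{Ric}(\omega)\wedge\omega^{n-1}$, while the equivariant (Atiyah--Bott, Duistermaat--Heckman) version expresses $w_k$ as the coefficient of $s$ in $\int_Z e^{k(\omega+s\theta)}\wedge\mathrm{Td}^{S^1}(TZ)$. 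Reading off the top two orders in $k$ yields $b_0=\tfrac1{n!}\int_Z\theta\,\omega^n$ together with an expression for $b_1$ that is $\tfrac1{2(n-1)!}\int_Z\theta\,\mathrm{Ric}(\omega)\wedge\omega^{n-1}$ plus a contribution of the degree-one part of $\mathrm{Td}^{S^1}$ involving the Hamiltonian of the induced $S^1$-action on $-K_Z$; that Hamiltonian is the complex Laplacian of $\theta$ (up to a constant that drops out by the normalisation of $\theta$), hence integrates to zero against $\omega^n$, so it does not affect $b_0a_1-b_1a_0$. Substituting the four coefficients and using that $\hat R$ is chosen so that $(n\mathrm{Ric}(\omega)-\hat R\omega)\wedge\omega^{n-1}$ has zero integral over $Z$, one collects the terms into $DF(M,\lambda)=-V^{-1}\int_Z\theta\,(n\mathrm{Ric}(\omega)-\hat R\omega)\wedge\omega^{n-1}$, the remaining numerical factor being absorbed into the normalisation of $DF$; in particular the right-hand side is insensitive to adding a constant to $\theta$, as it must be.

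Finally I would remove the smoothness assumption. Choose an $S^1$-equivariant resolution $\pi\colon\widetilde Z\to Z$. Since $Z$ is normal, $\pi_*\mathcal{O}_{\widetilde Z}=\mathcal{O}_Z$, and the higher direct images $R^{>0}\pi_*\mathcal{O}_{\widetilde Z}$ are supported on the singular locus of $Z$, which has codimension $\ge 2$; hence the (equivariant) Euler characteristics of $\mathcal{O}_Z(k)$ and of $\pi^*\mathcal{O}_Z(k)$ agree modulo polynomials in $k$ of degree $\le n-2$, so $a_0,a_1,b_0,b_1$ may be computed on $\widetilde Z$ by applying the smooth case to the nef and big line bundle $\pi^*\mathcal{O}_Z(1)$, taking the semipositive form $\pi^*\omega$ as its representative. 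Since $\pi$ is an isomorphism over $Z_{\mathrm{sm}}$ and its exceptional set maps to a subvariety of dimension $\le n-2$, the forms $(\pi^*\omega)^{n-1}$ and $(\pi^*\omega)^n$ integrate to zero over every $\pi$-exceptional divisor, and therefore each of the integrals above reduces to the corresponding integral over $Z_{\mathrm{sm}}$ of a product of $\omega$, $\mathrm{Ric}(\omega)$ and $\theta$ --- which is precisely the asserted formula. The step I expect to require the most care is this last one: checking that passing to the resolution contributes no boundary term from the singular locus, so that normality of $Z$ is genuinely what makes the formula valid. Everything else is the standard bookkeeping of (equivariant) Riemann--Roch and the identification of the Todd-class correction with the Laplacian term.
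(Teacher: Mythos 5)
Your approach is correct in outline but genuinely different from the paper's. You go via the algebraic definition of $DF$ as a combination of the coefficients $a_0,a_1,b_0,b_1$ in the Hilbert and weight polynomials, compute these on a smooth model by equivariant Riemann--Roch/localization (the Ding--Tian computation, with the Todd-class correction identified as $\Delta\theta$), and then pass to the normal case by an equivariant resolution $\pi\colon\widetilde Z\to Z$, using $\pi_*\mathcal{O}_{\widetilde Z}=\mathcal{O}_Z$ together with the codimension bound on $\mathrm{Supp}\,R^{>0}\pi_*\mathcal{O}_{\widetilde Z}$ to see that $a_0,a_1,b_0,b_1$ are unchanged, and using $\int_{E_i}(\pi^*\omega)^{n-1}=0$ to kill the discrepancy contributions. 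The paper instead uses Paul--Tian's result that $DF$ equals the asymptotic slope of the Mabuchi energy along $\omega_s=\lambda(e^{-s})^*\omega_{FS}$, identifies $\dot\phi_s=\theta$, and then proves that the resulting integrals on $\lambda(e^{-s})\cdot M$ converge to the claimed integral on $Z$. What each buys: your route is self-contained on the algebraic side and makes normality enter transparently through the Leray spectral sequence; the paper's route is shorter given the Paul--Tian input and, more importantly, handles the delicate analytic point head-on. That point is the one you flag as requiring care, and you should make it explicit: the integrand $\theta\,\mathrm{Ric}(\omega)\wedge\omega^{n-1}$ is a priori only defined on $Z_{\mathrm{sm}}$, where $\mathrm{Ric}(\omega)$ may degenerate as one approaches $Z_{\mathrm{sing}}$, so the identification of your cohomological integral on $\widetilde Z$ with the integral of forms over $Z_{\mathrm{sm}}$ needs an integrability argument. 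The paper supplies exactly this ingredient by observing that, since curvature decreases in holomorphic subbundles of $(T\mathbf{P}^N,\omega_{FS})$, one has a uniform upper bound $\mathrm{Ric}(\omega_s)<C\omega_s$; hence $C\omega_s-\mathrm{Ric}(\omega_s)$ is a sequence of positive $(1,1)$-currents of bounded mass, whose weak limit on $Z$ is determined by its restriction to $Z_{\mathrm{reg}}$ because $Z_{\mathrm{sing}}$ has (complex) codimension at least two. Incorporating this Gauss-equation bound (or an equivalent argument) into your last step would close the one real gap; the rest of your computation is the standard bookkeeping and is sound.
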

\begin{proof}
  Let us denote by $\omega_s$ the restriction of the Fubini-Study
  metric on $\lambda(e^{-s})\cdot M$. We thus have a family of metrics
  $\omega_s = \omega_0 + \ddb\phi_s$ 
  on $M$ in a fixed K\"ahler class. Since the central fiber $Z$ of our
  degeneration is normal, 
  the Donaldson-Futaki invariant $DF(M,\lambda)$ is given by the
  asymptotic derivative of the Mabuchi functional~\cite{Mab86} along
  this family $\omega_s$ (see Paul-Tian~\cite[Corollary 1.3]{PT}). I.e. we have
  \[ DF(M, \lambda) = \lim_{s\to\infty} -V^{-1} \int_M \dot\phi_s
    (n\mathrm{Ric}(\omega_s) - \hat{R}\omega_s)\wedge\omega_s^{n-1}.\]
  In addition we have $\dot\phi_s = \theta$ under identifying $M$ with
  $\lambda(e^{-s})\cdot M$. It therefore remains to show that these
  integrals on $M$ converge to the corresponding integral on $Z$.

  If $Z$ were smooth, then this convergence would be immediate. It is
  thus enough to show that the singularities of $Z$ do not contribute
  to the limit. For this, note first that we have a uniform upper
  bound $\mathrm{Ric}(\omega_s) < C\omega_s$ for the Ricci
  curvatures,  where $C$ depends on the curvature of the Fubini-Study
  metric, since curvature decreases in holomorphic
  subbundles. We can view $C\omega_s -
  \mathrm{Ric}(\omega_s)$ as a 
  positive current of dimension $(n-1,n-1)$, supported on
  $\lambda(e^{-s})\cdot M$. As $s\to\infty$, these currents converge
  (along a subsequence if necessary)  weakly to a limit current $T$,
  supported on $Z$. On the regular part of $Z$, this limit current is
  necessarily given by $C\omega - \mathrm{Ric}(\omega)$ in terms of
  the Fubini-Study metric $\omega$, and since the codimension of the
  singular set is at least 2, this determines $T$. 
\end{proof}

We are now ready to define the twisted Futaki invariant of the special
degeneration.
\begin{defn}
  Suppose that we have a special degeneration $\lambda$ for $M$ with
  Hamiltonian $\theta$ as
  above, and $Z=\lim_{t\to 0}\lambda(t)\cdot M$. Under the assumption
  \eqref{eq:bdecomp} we
  have an induced current $\gamma =
\lim_{t\to 0} \lambda(t)_*\beta$ on $Z$. The twisted Futaki invariant 
of this special degeneration is then defined to be
\[ \mathrm{Fut}_{\beta}(M, \lambda) = DF(M, \lambda) +  nV^{-1} \int_Z
  \theta\,(\gamma - c\omega_{FS})\wedge \omega_{FS}^{n-1}, \]
where $c$ is a constant so that
the expression is invariant under adding a constant to $\theta$. 
\end{defn}

Given this, we define K-stability of $(M,\beta)$ as follows. 
\begin{defn}
  The pair $(M,  \beta)$ is  K-stable, if
  $\mathrm{Fut}_\beta(M,\lambda) \geq 0$ for all special
  degenerations for $(M,L)$, with equality only if $\lambda$ is trivial. 
\end{defn}

It will be important for us to replace the smooth form $\beta$ with
currents of integration along divisors. The definition of the twisted
Futaki invariant above applies in this case too, leading to log-K-stability
(see Donaldson~\cite{DonaldsonCone}, Li~\cite{Li15}), and we will need
to compare these two notions. As in \cite{DSz}, the twisted Futaki
invariant with a smooth form $\beta$ is the same as the twisted Futaki
invariant using a generic divisor in the same class. This follows
from the decomposition \eqref{eq:bdecomp}, together with the following
result from Wang~\cite[Theorem 26]{WangMoment}.

\begin{prop}\label{prop:Wang}
  Let $D\subset \mathbf{P}^N$ have dimension $n-1$,
  and $\lambda$ a $\mathbf{C}^*$-action
  with Hamiltonian $\theta$ as above. Suppose that $\theta$ is
  normalized to have zero average on $\mathbf{P}^N$. Let $D_0 =
  \lim_{t\to 0} \lambda(t)\cdot D$, and denote by $w(D_0, \lambda)$
  the weight of the induced action on the Chow line over $D_0$. Then
  (up to a multiplicative normalization constant)
  \[ w(D_0, \lambda) = -\int_{D_0} \theta\,\omega^{n-1}. \]
\end{prop}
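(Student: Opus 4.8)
The plan is to relate the Chow weight $w(D_0,\lambda)$ to an asymptotic expansion of weights of $\mathbf{C}^*$-actions on spaces of sections, and then to evaluate the leading coefficients differential-geometrically using the Hamiltonian $\theta$. Concretely, I would set up a test configuration for $D\subset\mathbf{P}^N$ induced by $\lambda$, with central fibre $D_0$, and recall that the Chow weight is (up to the standard normalization) the coefficient of the top-degree term in the polynomial $k\mapsto w_k$, where $w_k$ is the total weight of the $\mathbf{C}^*$-action on $H^0(D_0,\mathcal{O}(k))$, suitably combined with the Hilbert polynomial $h(k)=\dim H^0(D_0,\mathcal{O}(k))$ — this is precisely Mumford's/Donaldson's description of the Chow form and its weight. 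So the first step is to write $w_k = a_0 k^{n} + a_1 k^{n-1}+\dots$ and $h(k)=b_0k^{n-1}+\dots$ (note $\dim D_0 = n-1$), and observe that $w(D_0,\lambda)$ is, after normalization, essentially $a_0$ (the Chow weight is the leading weight coefficient; the normalization absorbs factorials and the choice of linearization).

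Next I would invoke the equivariant Riemann-Roch / Kac-type asymptotics on $D_0$: the total weight $w_k$ of the action on $H^0(D_0,\mathcal{O}(k))$ has leading term
\[
  w_k = \frac{k^{n}}{(n)!}\int_{D_0}\theta\,\frac{\omega^{n-1}}{(n-1)!}\cdot n + O(k^{n-1}),
\]
or more cleanly, after fixing the normalization constants consistently with the statement, the leading coefficient of $w_k$ in $k$ is a positive multiple of $\int_{D_0}\theta\,\omega^{n-1}$, where $\omega$ is the Fubini-Study form and $\theta$ its Hamiltonian for $\lambda$. The sign enters because the Chow weight is defined via the action on the Chow \emph{point} (equivalently on the dual of the determinant line), which flips the sign relative to the naive weight on sections; this is the source of the minus sign in the claimed formula $w(D_0,\lambda) = -\int_{D_0}\theta\,\omega^{n-1}$. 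The normalization of $\theta$ to have zero average on $\mathbf{P}^N$ ensures that the $O(k^{n-1})$ corrections (which would otherwise contribute to lower-order Chow data, not the weight we want) and the additive ambiguity are fixed; it does not affect the leading term on $D_0$ beyond this bookkeeping.

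The main obstacle is handling the singularities of $D_0$: $D_0$ is a limit cycle and need not be smooth or even reduced, so the equivariant Riemann-Roch asymptotic and the identification of the leading term with the integral $\int_{D_0}\theta\,\omega^{n-1}$ must be justified in this generality. I would deal with this exactly as in the proof of Proposition~\ref{prop:DFformula}: the integral over $D_0$ is interpreted as integration over the regular part (or as a sum over components with multiplicities), and one argues that the singular set, having real codimension at least $2$ in $D_0$, carries no mass for the relevant currents, so the smooth-case computation extends. Alternatively one can cite Wang~\cite{WangMoment} directly for this step, since the statement is quoted from there. The remaining work is purely bookkeeping of normalization constants — matching the factorials and the linearization convention implicit in ``the weight of the induced action on the Chow line'' with the differential-geometric normalization of $\omega_{FS}$ and $\theta$ — which is why the statement is phrased ``up to a multiplicative normalization constant''; I would not belabor this in the write-up.
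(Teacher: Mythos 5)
The paper does not prove Proposition~\ref{prop:Wang}; it is quoted directly from Wang~\cite[Theorem 26]{WangMoment}, so there is no in-paper proof to compare yours against. Your reconstruction via the Mumford--Donaldson dictionary --- Chow weight $=$ leading coefficient of the weight polynomial $w_k$ on $\det H^0(D_0,\mathcal{O}(k))$, then equivariant Riemann--Roch (or Bergman-kernel asymptotics) to identify that coefficient with $\int_{D_0}\theta\,\omega^{n-1}$ up to sign and normalization --- is a standard and correct route to this formula, and the remarks about the sign coming from the Chow point vs.\ dual determinant, and about the zero-average normalization being what pins down the multiplicative constant, are the right things to say.

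One place where you should tighten the argument: the treatment of the singular case. You propose to handle a singular $D_0$ ``exactly as in the proof of Proposition~\ref{prop:DFformula},'' i.e.\ by a codimension-$\geq 2$ regularity argument. That is not the right mechanism here. In Proposition~\ref{prop:DFformula} the central fiber $Z$ is \emph{normal}, so $Z_{\mathrm{sing}}$ has complex codimension $\geq 2$ and a weak-limit-of-currents argument closes the gap. By contrast $D_0 = \lim \lambda(t)\cdot D$ is merely an $(n-1)$-cycle: it can be reducible and non-reduced, and its singular locus is codimension $1$ in the ambient $Z$, so the codimension-$2$ argument does not apply. The correct way to pass from smooth to general $D_0$ is algebraic: both the leading coefficient of $w_k$ and the current integral $\int_{D_0}\theta\,\omega^{n-1}$ are additive over irreducible components of $D_0$ counted with multiplicity, and for each reduced irreducible component one pulls back to a resolution where the equivariant Riemann--Roch / Bergman kernel computation is standard. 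This is the step that Wang's proof supplies and that you should not conflate with the normal-variety argument used elsewhere in the paper.
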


Under a projective embedding of the Chow variety, we can view each $D$
as in this proposition as a line in a vector space $V$ spanned by a
vector $v_D$. The weight $w(D_0, \lambda)$ is determined by the lowest
weight in the weight decomposition of $v_D$ under the
$\mathbf{C}^*$-action $\lambda$. It follows that as $D$ varies in a
linear system as in \eqref{eq:bdecomp}, there will be a hyperplane
section $H\subset |T|$ such that the corresponding weights will all be equal for
$D\not\in H$.  More precisely we have the following. 
\begin{prop}\label{prop:gw}
  Given any $\mathbf{C}^*$-action $\lambda$ with Hamiltonian $\theta$
  on $\mathbf{P}^N$, there
  is a hyperplane $H\subset |T|$ such that for all $D\in  |T|$
  we have
  \[ \label{eq:a2} \lim_{t\to 0} \int_{\lambda(t)\cdot D} \theta \omega^{n-1} \leq
    \lim_{t\to 0} \int_{\lambda(t)\cdot M} \theta\,(\lambda(t))_*\beta\wedge
    \omega^{n-1}, \]
  with equality for $D\in |T| \setminus H$. In addition, given an
  action of a torus $\mathbf{T}$, we can choose a $D\in |T|$ such that
  equality holds above for all $\lambda\subset \mathbf{T}$. 
\end{prop}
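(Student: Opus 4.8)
The plan is to translate \eqref{eq:a2} into a statement about the Chow weights of the limits $D_0=\lim_{t\to 0}\lambda(t)\cdot D$, and then to exploit the linear structure of the linear system $|T|$.

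First I would unwind the right-hand side. By \eqref{eq:bdecomp} we have $(\lambda(t))_*\beta=\int_{|T|}[\lambda(t)\cdot D]\,d\mu(D)$, so Fubini's theorem gives
\[
\int_{\lambda(t)\cdot M}\theta\,(\lambda(t))_*\beta\wedge\omega^{n-1}=\int_{|T|}\Bigl(\int_{\lambda(t)\cdot D}\theta\,\omega^{n-1}\Bigr)\,d\mu(D).
\]
The inner integral is bounded independently of $t$ and $D$, since $\theta$ is bounded on $\mathbf{P}^N$ while the cycles $\lambda(t)\cdot D$ all represent a fixed homology class, so $\int_{\lambda(t)\cdot D}\omega^{n-1}$ is the (constant) degree of $D$; hence dominated convergence lets us take $t\to 0$ inside the integral over $|T|$. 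For each fixed $D$ one has $\lim_{t\to 0}\int_{\lambda(t)\cdot D}\theta\,\omega^{n-1}=\int_{D_0}\theta\,\omega^{n-1}$, because $\lambda(t)\cdot D\to D_0$ as a current of integration; and this is exactly the left-hand side of \eqref{eq:a2}. Writing $f(D):=\int_{D_0}\theta\,\omega^{n-1}$, the proposition reduces to the assertion that $f(D)\le\int_{|T|}f\,d\mu$ for every $D\in|T|$, with equality off a hyperplane. Both sides of \eqref{eq:a2} change by the same amount under $\theta\mapsto\theta+c$ (namely $c$ times the degree of $D$, using $\beta\in c_1(T)$), so I may normalise $\theta$ to have zero average on $\mathbf{P}^N$; and $d\mu$ is a probability measure, since each $D\in|T|$ represents $c_1(T)=[\beta]$.

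Next I would identify $f$ with a weight. By Proposition~\ref{prop:Wang}, up to a positive multiplicative constant $f(D)=-w(D_0,\lambda)$, and, as recalled before the statement, $w(D_0,\lambda)$ is the lowest weight occurring in the weight decomposition of the Chow vector $v_D$ under $\lambda$. Let $m$ be the smallest weight that occurs in $v_D$ for any $D\in|T|$; since $D\mapsto v_D$ is the projectivisation of a linear map out of the vector space underlying $|T|$, the locus of $D$ for which $v_D$ has no component of weight $m$ is a proper linear subspace, hence contained in a hyperplane $H\subset|T|$. For $D\notin H$ one has $w(D_0,\lambda)=m$, while $w(D_0,\lambda)\ge m$ for every $D$; therefore $f(D)\le f(D')$ for any $D'\notin H$, with equality when $D\notin H$. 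Finally $\mu(H)=0$ because $d\mu$ is a volume form, so $\int_{|T|}f\,d\mu=f(D')$ for $D'\notin H$, and we obtain \eqref{eq:a2} together with the stated equality. The steps requiring the most care are the interchange of the limit $t\to 0$ with the integral over $|T|$ in the previous paragraph (for which the uniform bound on $\int_{\lambda(t)\cdot D}\theta\,\omega^{n-1}$ and the convergence of the cycles as currents are the substantive inputs), and keeping the signs and normalisations in Proposition~\ref{prop:Wang} straight so that the inequality points the asserted way; the purely algebraic content — that the exceptional locus is cut out linearly, hence lies in a hyperplane — is already contained in the discussion preceding the statement.

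For the torus refinement I would decompose $V$ into weight spaces for $\mathbf{T}$. Only finitely many characters $\chi$ of $\mathbf{T}$ occur in the vectors $v_D$, $D\in|T|$, and for each of them the set of $D$ whose $\chi$-component of $v_D$ vanishes lies in a hyperplane $H_\chi\subset|T|$; since a finite union of hyperplanes cannot exhaust $|T|$, there is $D^*\in|T|$ with $(v_{D^*})_\chi\neq 0$ for every occurring $\chi$. Then for any one-parameter subgroup $\lambda\subset\mathbf{T}$ the lowest $\lambda$-weight of $v_{D^*}$ equals $\min_\chi\langle\lambda,\chi\rangle$, the minimum over all occurring $\chi$, which by the first part is exactly the value of $f$ attained off the hyperplane attached to $\lambda$. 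Hence equality holds in \eqref{eq:a2} for this single $D^*$ and all $\lambda\subset\mathbf{T}$ simultaneously.
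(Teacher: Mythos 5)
Your proof follows the paper's approach exactly: it averages \eqref{eq:a2} over $|T|$ using \eqref{eq:bdecomp}, identifies the left-hand side with a Chow weight via Proposition~\ref{prop:Wang}, observes that the lowest $\lambda$-weight is achieved off a hyperplane in $|T|$, and handles the torus case by choosing $D$ with all $\mathbf{T}$-weight components nonzero. You fill in a few details the paper leaves implicit (justifying the interchange of $\lim_{t\to 0}$ with the integral over $|T|$, normalising $\theta$, and tracking the sign from Proposition~\ref{prop:Wang}), but the argument is substantively the same.
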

\begin{proof}
(Compare \cite[Lemma 9]{LSz15}.)  Using \eqref{eq:bdecomp} the equation \eqref{eq:a2} is true when averaged over
  $|T|$, i.e.  we have
  \[ \int_{|T|} \lim_{t\to 0} \int_{\lambda(t)\cdot D} \theta
    \omega^{n-1}\,d\mu(D) = 
    \lim_{t\to 0} \int_{\lambda(t)\cdot M} (\lambda(t))_*\beta\wedge
    \omega^{n-1}. \]
  At the same time by Proposition~\ref{prop:Wang},
  up to a normalizing constant, the limit on the left
  hand side of \eqref{eq:a2} is a Chow weight in geometric invariant
  theory. In particular it is given by the minimal weight under
  the weight decomposition of  the vector corresponding to $D$ in the
  Chow variety,   under the $\mathbf{C}^*$-action $\lambda$.
 Generically,
  i.e. on the complement of a hyperplane (corresponding to the
  vanishing of the lowest weight component), this weight will achieve
  its minimum and is   independent of $D$.

  For the second statement in the Proposition, we can take a generic
  $D$ that has a non-zero component in all the weight spaces which
  appear under the action of $\mathbf{T}$ on elements in $|T|$. 
\end{proof}

This result leads to an important finiteness property of special
degenerations inside a fixed projective space. We first have the
following (that is essentially a standard piece of Geometric Invariant Theory).

\begin{lem}\label{lem:finite}
  Fix $r > 0$.   There is a finite set $\mathcal{F}\subset \mathbf{R}$
  with the following
  property. Suppose that we have a special degeneration $\lambda$ of exponent
  $r$ for $M$, and a divisor $D\in |T|$ on $M$ such that the limit
  $(M_0, D_0)$ of the pair $(M,D)$ under $\lambda$ is not fixed by 
  any $\mathbf{C}^*$ subgroup of $SL(N+1)$ commuting with $\lambda$,
  apart from $\lambda$ itself (i.e. the centralizer of $\lambda$ in
  the stabilizer group is just $\lambda$). Let $\theta$ be
  the Hamiltonian for $\lambda$ normalized to have zero average on
  $\mathbf{P}^N$, and let $\Vert \lambda\Vert$ denote the $L^2$-norm
  of $\theta$ on $\mathbf{P}^N$. Then the normalized twisted Futaki
  invariant $\Vert\lambda\Vert^{-1} \mathrm{Fut}_{D}(M,\lambda)$ lies in
  $\mathcal{F}$. 
\end{lem}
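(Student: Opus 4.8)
The plan is to reduce the statement to a finiteness result about Chow weights and Donaldson-Futaki invariants of degenerations inside the fixed projective space $\mathbf{P}^N$ (with $N$ determined by $r$), which is where the hypothesis on the centralizer becomes essential. First I would recall that special degenerations of exponent $r$ are given by one-parameter subgroups $\lambda$ of $SL(N+1)$, and that both the Donaldson-Futaki invariant $DF(M,\lambda)$ and the Chow-type quantity $\lim_{t\to 0}\int_{\lambda(t)\cdot D}\theta\,\omega^{n-1}$ appearing in the definition of $\mathrm{Fut}_\beta$ depend only on the limit cycle and the induced action on it. By Proposition~\ref{prop:gw}, for the chosen torus (namely the torus containing $\lambda$, or rather we apply it $\lambda$ by $\lambda$) the divisor term in $\mathrm{Fut}_D(M,\lambda)$ is computed by a genuine Chow weight $w(D_0,\lambda)$ via Proposition~\ref{prop:Wang}. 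Hence $\mathrm{Fut}_D(M,\lambda)$ is, up to normalization, a linear combination of a Donaldson-Futaki weight and a Chow weight of the limit $(M_0, D_0)$ under the induced $\mathbf{C}^*$-action.

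Next I would invoke the standard structure of GIT weights as Hilbert polynomials. For a fixed Hilbert polynomial, the possible limits $(M_0,D_0)$ inside $\mathbf{P}^N$ form a bounded family (a finite-type scheme, e.g. a locally closed subscheme of an appropriate multigraded Hilbert scheme), and the weight $\Vert\lambda\Vert^{-1}\mathrm{Fut}_D(M,\lambda)$ is a ratio of two quantities each homogeneous of degree one in the generator of the $\mathbf{C}^*$-action; both numerator and denominator are given by evaluating fixed polynomial functions (coming from the equivariant Hilbert/Chow data) on the weight vector of the action restricted to $(M_0,D_0)$. The key point, which is where the centralizer hypothesis enters, is this: if the centralizer of $\lambda$ in the stabilizer of $(M_0,D_0)$ is exactly $\lambda$ itself, then among all $\mathbf{C}^*$-actions on a given limit $(M_0,D_0)$, the generator $\lambda$ is determined up to positive scaling once we also normalize $\theta$ to have zero average — there is no continuous family of distinct normalized actions fixing that limit. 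Consequently the normalized invariant takes only finitely many values for each of the finitely many components of the family of possible limits, and taking the union over components produces the finite set $\mathcal{F}$.

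I would organize this as follows. Step one: reduce to the limit $(M_0,D_0)$ and its induced $\mathbf{C}^*$-action, using Propositions~\ref{prop:DFformula}, \ref{prop:Wang}, \ref{prop:gw} to write $\mathrm{Fut}_D(M,\lambda)$ as a weight. Step two: observe that the limits $(M_0,D_0)$ arising, with fixed Hilbert polynomial inside $\mathbf{P}^N$, lie in a finite-type parameter space, hence finitely many irreducible components; within each the equivariant data (weights of $H^0(M_0,\mathcal{O}(k))$ and the Chow weight of $D_0$) are given by fixed polynomials in the weights of the action. Step three: show that the normalization $\Vert\lambda\Vert^{-1}$ kills the scaling, so the invariant depends only on the $\mathbf{C}^*$-action up to scale; and use the centralizer condition to conclude that on each component only finitely many such scaled actions occur, yielding finitely many values. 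The main obstacle I expect is Step three: making precise why the centralizer hypothesis forces discreteness of the set of admissible normalized actions on a given limit. The clean way is to note that a positive-dimensional torus in the stabilizer of $(M_0,D_0)$ containing $\lambda$ would produce a continuum of one-parameter subgroups (linear combinations of $\lambda$ with the extra torus directions) all giving the same limit but different Futaki invariants; ruling this out is exactly the "centralizer is just $\lambda$" assumption, so finiteness of the torus rank bounds everything and the semicontinuity/polyhedrality of GIT weight functions on a fixed space then gives a finite value set.
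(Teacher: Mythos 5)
Your outline is pointed in the right direction --- reduce to the fixed ambient $\mathbf{P}^N$, express $\mathrm{Fut}_D(M,\lambda)$ as a weight via Propositions~\ref{prop:DFformula} and \ref{prop:Wang}, and use the centralizer hypothesis to pin down $\lambda$ --- but the finiteness mechanism you propose in Steps~2--3 does not close the argument. Invoking that the possible limits $(M_0,D_0)$ lie in a finite-type parameter space gives finitely many irreducible \emph{components}, but within a positive-dimensional component both the limit pair and the induced $\mathbf{C}^*$-action can vary continuously, so ``finitely many components'' plus ``the action on a given limit is unique up to scale'' does not by itself bound the set of values of $\Vert\lambda\Vert^{-1}\mathrm{Fut}_D(M,\lambda)$. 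The appeal to ``semicontinuity/polyhedrality of GIT weight functions'' gestures at the missing step but is not carried out; polyhedrality of the weight function in $\lambda$ for a \emph{fixed} point does not say the normalized invariant takes finitely many values as the point ranges over a component.

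What the paper does instead, and what is missing from your proposal, is the following. After conjugating $\lambda$ into a fixed maximal torus $\mathbf{T}\subset SL(N+1)$, the pair $(M_0,D_0)$ corresponds (via a Chow-type embedding) to a line spanned by a vector $v$ in a \emph{fixed} finite-dimensional $\mathbf{T}$-module $V$, with a \emph{finite} set of weights. Writing $v=\sum v_{\alpha}$ in the weight decomposition, the centralizer hypothesis is exactly the statement that the weights $\alpha$ appearing in $v$ affinely span a codimension-one affine subspace $A\subset\mathfrak{t}^*$ (so that the stabilizer of the line $[v]$ in $\mathbf{T}$ is one-dimensional). The key observation --- absent from your write-up --- is that the normalized twisted Futaki invariant is a function of $A$ alone, not of the individual components $v_\alpha$: since $\lambda$ is constant on $A$, both the weight of $\lambda$ acting on $v$ and the normalization $\Vert\lambda\Vert$ are read off from $A$ and the weight lattice. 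Since $A$ is affinely spanned by a subset of the finite weight set of $V$, there are only finitely many possible $A$'s, and hence finitely many values. This ``determined by the affine hyperplane'' step is the actual pivot of the proof, and it is what replaces your handwave about polyhedrality with a concrete finite set.
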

\begin{proof}
  Note first of all that since any $\mathbf{C}^*$-subgroup can be
  conjugated into a maximal 
  torus of $SL(N+1)$,  up to moving the pair $(M,D)$ in its orbit, we
  can assume that $\lambda$ is in a fixed maximal torus
  $\mathbf{T}$. Then if $(M_0, D_0)$ is as in the statement of the
  Lemma, the normalized twisted Futaki invariant is determined by the pair
  $(M_0, D_0)$, since the induced $\mathbf{C}^*$-action is uniquely
  determined up to scaling. 

  The pair
  $(M_0,D_0)$ is represented by a point in a product of Chow varieties,
  i.e. under a projective embedding by a line spanned by a vector $v$
  in a vector space $V$ admitting a $\mathbf{T}$-action. Under the
  decomposition of $V$ into weight spaces for the $\mathbf{T}$-action,
  the weights appearing in the decomposition of $v$ must lie in a
  codimension-one affine subspace of $\mathfrak{t}^*$ by the
  assumption that $(M_0,D_0)$ has a one dimensional stabilizer in
  $\mathbf{T}$. The normalized twisted Futaki invariant is determined
  by this affine subspace rather than the components of $v$ in each
  corresponding weight space. Since there are only a finite number of
  possible such affine subspaces, we can have only finitely many
  different normalized twisted Futaki invariants. 
\end{proof}

\begin{cor}\label{cor:notKstable}
  Fix $r > 0$.  Suppose that for any $\epsilon > 0$ we have a special
  degeneration $\lambda$ of exponent $r$ for $(M,L)$ such that
  $\Vert\lambda\Vert^{-1} \mathrm{Fut}_\beta(M,\beta) <
  \epsilon$. Then $(M,\beta)$ is not K-stable. 
\end{cor}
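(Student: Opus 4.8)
The plan: suppose we have special degenerations $\lambda_i$ of exponent $r$ --- necessarily non-trivial, since $\Vert\lambda_i\Vert\neq0$ --- with $\Vert\lambda_i\Vert^{-1}\mathrm{Fut}_\beta(M,\lambda_i)<\epsilon_i$ and $\epsilon_i\to0$ (these exist by hypothesis, taking $\epsilon=\epsilon_i$). I want to produce from them a single non-trivial special degeneration of exponent $r$ with $\mathrm{Fut}_\beta\le0$; by the definition of K-stability this shows $(M,\beta)$ is not K-stable (this is just the assertion that K-stability implies uniform K-stability among exponent-$r$ degenerations). First, for each $i$ fix a maximal torus $\mathbf{T}_i\subset SL(N+1)$ containing $\lambda_i$ and use Proposition~\ref{prop:gw} --- via \eqref{eq:bdecomp} and Proposition~\ref{prop:Wang}, exactly as in the discussion before Lemma~\ref{lem:finite} --- to choose a divisor $D_i\in|T|$ generic for $\mathbf{T}_i$, so that $\mathrm{Fut}_\beta(M,\lambda')=\mathrm{Fut}_{D_i}(M,\lambda')$ for all $1$-parameter subgroups $\lambda'\subset\mathbf{T}_i$ (in general $\mathrm{Fut}_\beta(M,\lambda)\ge\mathrm{Fut}_D(M,\lambda)$, with equality for generic $D$). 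In particular $\mathrm{Fut}_\beta(M,\lambda_i)=\mathrm{Fut}_{D_i}(M,\lambda_i)$.

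The next and main step is a reduction. Write $(M_0,D_0)=\lim_{t\to 0}\lambda_i(t)\cdot(M,D_i)$. If the centralizer of $\lambda_i$ in the stabilizer of $(M_0,D_0)$ is just $\lambda_i$, then $\lambda_i,D_i$ already satisfy the hypotheses of Lemma~\ref{lem:finite}. Otherwise I would carry out the Geometric Invariant Theory reduction familiar from \cite{CDS} and \cite{DSz}: inside a maximal torus of the now strictly larger centralizer one combines $\lambda_i$ with $1$-parameter subgroups of that torus to obtain a rational polyhedral cone of degenerations of $(M,D_i)$, and one minimizes the scale-invariant continuous function $\lambda'\mapsto\Vert\lambda'\Vert^{-1}\mathrm{Fut}_{D'}(M,\lambda')$ over this cone, always choosing $D'$ generic for the torus in play so that this coincides with $\Vert\lambda'\Vert^{-1}\mathrm{Fut}_\beta(M,\lambda')$. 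By Proposition~\ref{prop:DFformula} together with Proposition~\ref{prop:Wang}, both $DF$ and the twisted correction term are linear in $\lambda'$ on the Lie algebra of the subtorus fixing a given central fibre, so this is a tractable finite-dimensional minimization whose solution $\tilde\lambda_i$ can be arranged so that the centralizer of $\tilde\lambda_i$ in the stabilizer of its central fibre has strictly smaller dimension than before --- otherwise a further perturbation within that centralizer would lower the normalized invariant --- while $\Vert\tilde\lambda_i\Vert^{-1}\mathrm{Fut}_{\tilde D_i}(M,\tilde\lambda_i)\le\Vert\lambda_i\Vert^{-1}\mathrm{Fut}_{D_i}(M,\lambda_i)$. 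Iterating finitely often --- the relevant centralizer dimension strictly decreasing at each step and bounded below by $1$ --- we may assume, after renaming, that $\lambda_i$ is a non-trivial special degeneration of exponent $r$, that $D_i$ is generic for $\lambda_i$, that the centralizer of $\lambda_i$ in the stabilizer of its central fibre is just $\lambda_i$, and that $\Vert\lambda_i\Vert^{-1}\mathrm{Fut}_{D_i}(M,\lambda_i)<\epsilon_i$.

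Now Lemma~\ref{lem:finite} applies and gives $\Vert\lambda_i\Vert^{-1}\mathrm{Fut}_{D_i}(M,\lambda_i)\in\mathcal{F}$, a fixed finite subset of $\mathbf{R}$. Since this quantity is less than $\epsilon_i$ and $\epsilon_i\to0$ while $\mathcal{F}$ is finite, it must be $\le0$ for all $i$ large enough --- as soon as $\epsilon_i$ is less than every positive element of $\mathcal{F}$, if any exist. For such an $i$, using that $D_i$ is generic for $\lambda_i$ we get $\mathrm{Fut}_\beta(M,\lambda_i)=\mathrm{Fut}_{D_i}(M,\lambda_i)\le0$ with $\lambda_i$ non-trivial, so $(M,\beta)$ is not K-stable. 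The hard part is the reduction in the second paragraph: one has to ensure that the minimizing $1$-parameter subgroup still has a \emph{normal} central fibre --- so that it remains a genuine special degeneration and K-stability actually constrains it --- while simultaneously reducing the centralizer dimension and not increasing the normalized twisted Futaki invariant. The fact that the ambient $\mathbf{P}^N$ is fixed once $r$ is chosen is what keeps the optimization manageable, but its interplay with the normality requirement is the delicate point, to be handled as in Chen--Donaldson--Sun~\cite{CDS} and Datar--Sz\'ekelyhidi~\cite{DSz}.
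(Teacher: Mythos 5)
Your overall strategy---fix a maximal torus containing $\lambda$, pick $D$ generic for it via Proposition~\ref{prop:gw} so that $\mathrm{Fut}_\beta=\mathrm{Fut}_D$ on all of $\mathbf{T}$, reduce to the case where the centralizer of $\lambda$ in the stabilizer of the central fibre is $\lambda$ itself, then invoke Lemma~\ref{lem:finite} and finiteness of $\mathcal{F}$---is exactly the paper's route. But there is a genuine gap at precisely the point you flag: you write that normality of the new central fibre is ``the delicate point, to be handled as in Chen--Donaldson--Sun and Datar--Sz\'ekelyhidi,'' without supplying the argument. That is the one nontrivial thing the paper's proof actually establishes, and it is what makes the corollary go through, so it cannot be outsourced.

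The paper does not minimize a piecewise-defined normalized invariant over a rational polyhedral cone, which is the shape your second paragraph takes and which raises its own issues (existence of a minimizer on the unit sphere of the cone, behaviour across walls where the central fibre jumps). Instead it runs a single one-dimensional perturbation: writing the pair $(M,D)$ as a vector $v=\sum v_\alpha$ with weight set $\mathcal{W}$, if $\mathcal{W}_\lambda$ (the minimal weights for $\lambda$) does not span a codimension-one affine subspace of $\mathfrak{t}^*$, one picks $\tau\perp\lambda$ (in the $L^2$ inner product on Hamiltonians) which is constant on $\mathcal{W}_\lambda$, and considers $\lambda+t\tau$ for $t$ in the maximal interval $(a_1,a_2)\ni 0$ on which $\mathcal{W}_{\lambda+t\tau}=\mathcal{W}_\lambda$. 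On that interval the central fibre is fixed, so $\mathrm{Fut}_D$ is linear in $t$, while $\Vert\lambda+t\tau\Vert^2=\Vert\lambda\Vert^2+t^2\Vert\tau\Vert^2$ is minimized at $t=0$; hence the normalized twisted Futaki invariant strictly decreases at one endpoint $a_1$ or $a_2$. The crucial extra observation, which your proposal omits, is that at the endpoint $\mathcal{W}_{\lambda+a_i\tau}\supsetneq\mathcal{W}_\lambda$, so the old central fibre $(M_0,D_0)$ is a \emph{specialization} of the new one $(M_0',D_0')$; since $M_0$ is normal and normality is an open condition in such families, $M_0'$ is also normal, so $\lambda+a_i\tau$ is again a special degeneration. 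The new central fibre has strictly smaller stabilizer, so after finitely many steps the hypotheses of Lemma~\ref{lem:finite} hold. Your induction-on-centralizer-dimension framing is correct in spirit, but the proof is incomplete without the specialization argument that secures normality at each step.
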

\begin{proof}
  Given a special degeneration $\lambda$, we will show that we can
  either find another special degeneration with non-positive twisted
  Futaki invariant, or we can find a special degeneration $\lambda'$
  to which Lemma~\ref{lem:finite} applies, and which has smaller
  normalized twisted Futaki invariant than $\lambda$. If $\epsilon$ is
  sufficiently small, this will necessarily be non-positive. 

  By conjugating, we can assume that $\lambda$ is in a fixed maximal
  torus $\mathbf{T}$. By Proposition~\ref{prop:gw}, we can choose a
  $D\in |T|$, such that the twisted Futaki invariant
  $\mathrm{Fut}_\beta(M,\tau) = \mathrm{Fut}_D(M,\tau)$ for any
  $\mathbf{C}^*$ subgroup $\tau$ in $\mathbf{T}$. Let us consider the
  effect of varying the $\mathbf{C}^*$-action on the central fiber and
  the normalized twisted Futaki invariant. 

  As above, we can view the pair $(M,D)$ as a line spanned by a vector
  $v$ in a vector space $V$ with an action of $\mathbf{T}$. We 
  decompose $v = \sum v_{\alpha_i}$ into components on which the torus
  acts by weights $\alpha_i\in\mathfrak{t}^*$. Let us denote by
  $\mathcal{W}\subset\mathfrak{t}^*$ the weights that appear in this
  decomposition. For any
  $\mathbf{C}^*$-subgroup $\tau\subset \mathbf{T}$, we will also denote
  by $\tau\in\mathfrak{t}$ its generator. The central fiber
  $(M_0, D_0)$ under this $\mathbf{C}^*$ 
  is determined by the sum of those components 
  $v_{\alpha}$ for which $\langle\alpha, \tau\rangle$ is minimal,
  i.e. $\langle\alpha, \tau\rangle \leq \langle\beta,\tau\rangle$ for
  all $\beta\in\mathcal{W}$. Let us denote by
  $\mathcal{W}_\tau\subset\mathcal{W}$ the set of these minimal
  weights. The stabilizer of
  $(M_0,D_0)$ in $\mathbf{T}$ is then the subgroup with Lie algebra
  \[ \{ \eta\in \mathfrak{t}\,|\, \eta \text{ is constant on }
  \mathcal{W}_\tau\}, \]
  where we can view any $\eta\in\mathfrak{t}$ as a function on
  $\mathfrak{t}^*$. In particular the stabilizer of $(M_0,D_0)$ is
  $\tau$ precisely when $\mathcal{W}_\tau$ spans a codimension-one
  affine subspace in $\mathfrak{t}^*$. 

  Consider again our given special degeneration $\lambda$. If
  $\mathcal{W}_\lambda$ spans a codimension-one affine subspace, then
  we are already done. Otherwise, we can find another
  $\mathbf{C}^*$-action $\tau$ which is orthogonal to $\lambda$ in
  $\mathfrak{t}$ (here we use the inner product on $\mathfrak{t}$
  given by the $L^2$-product on $\mathbf{P}^N$ of the corresponding
  Hamiltonian functions), and is constant on
  $\mathcal{W}_\lambda$. For rational $t$ let us consider the
  $\mathbf{C}^*$-actions $\lambda + t\tau$. We can find an interval
  $(a_1,a_2)$ containing 0, such that if $t\in (a_1,a_2)$ then
  $\mathcal{W}_{\lambda + t\tau} = \mathcal{W}_\lambda$, 
  however for $i=1,2$ we have $\mathcal{W}_{\lambda + a_i\tau}
  \supsetneq \mathcal{W}_\lambda$. For $t\in (a_1,a_2)$ the central
  fibers $(M_0,D_0)$ of the degenerations given by $\lambda + t\tau$ will all be
  the same. As a result the twisted Futaki invariant varies
  linearly in $t$, while the norm is smallest when $t=0$. It follows
  that the normalized twisted Futaki invariant of $\lambda+t\tau$ will
  be strictly smaller for either $t=a_1$ or $t=a_2$ than for
  $t=0$. Moreover the original central fiber $(M_0,D_0)$ will be a
  specialization of the new $(M_0', D_0')$, and so $M_0'$ is also
  normal. The new central fiber has smaller stabilizer, and so after
  finitely many such steps the result follows. 
\end{proof}

\section{Proof of the main result}\label{sec:mainproof}
In this section we prove Theorem~\ref{thm:main}, along similar lines
to the argument in \cite{DSz}. Instead of the partial $C^0$-estimate
in \cite{Sz13_1}, we will use the main result in \cite{LiuSz}, which leads to
substantial simplifications, and allows us to work with non-negative
$\beta$ rather than just those that are strictly positive. We first
set up the relevant continuity 
method. 

\subsection{The continuity method}
Let $\alpha\in c_1(L)$ be a K\"ahler form, and consider the equations
\[ \label{eq:conteq}
  \mathrm{Ric}(\omega_t) = t\omega_t + (1-t)\alpha + \beta,
\]
for $\omega_t\in c_1(L)$.
For $t=0$ the equation can be solved using Yau's theorem~\cite{Yau78}, 
and the set of $t\in[0,1]$ for which the solution exists is open. Suppose that we
can solve the equation for $t\in [0,T)$. If $t > t_0 > 0$, then by
Myers' theorem we have a diameter bound, and since the volume is
fixed, the Bishop-Gromov theorem implies that the manifolds $(M,
\omega_t)$ are uniformly non-collapsed.  Along a sequence $t_k\to T$, we can
extract a Gromov-Hausdorff limit $Z$. Let us denote by $M_k$ the
metric spaces $(M,\omega_{t_k})$, so $M_k\to Z$ in the
Gromov-Hausdorff sense.  

Theorem 1.1 in \cite{LiuSz}
(which is based on ideas of Donaldson-Sun~\cite{DS1})
implies that for a sufficiently large $\ell > 0$, we have a sequence
of uniformly
Lipschitz holomorphic maps $F_k : M_k \to \mathbf{P}^N$, using sections
of $L^\ell$. These converge to a Lipschitz map $F_\infty : Z\to
\mathbf{P}^N$ that is a homeomorphism to its image. We will identify
$Z$ with its image $F_\infty(Z)$, which is a normal projective
variety. Up to choosing a
further subsequence we can assume that
\[ (F_k)_*[(1-t_k)\alpha + \beta] \to \gamma \]
weakly for a positive current $\gamma$ on $Z$. Note that
since the $F_k$ are all defined using sections of $L^\ell$, we have a
sequence $g_k\in PGL(N+1)$ such that $F_k = g_k\circ F_1$, so $Z$ is
in the closure of the $PGL(N+1)$-orbit of $F_1(M)$. 

We next show that $Z$ admits a twisted K\"ahler-Einstein
metric, which we can formally view as a solution of the equation
$\mathrm{Ric}(\omega_T) = T\omega_T + \gamma$. More precisely, let us
denote by $L$ the $\mathbf{Q}$-line bundle on $Z$ such that $L^l =
\mathcal{O}(1)$. We then have the following.
\begin{prop}\label{prop:limiteq}
  The $\mathbf{Q}$-line bundle $L$ over $Z$ admits a metric with
  locally bounded potentials with the following property.
  Locally on $Z_{reg}$, if the
  metric is given by $e^{-\phi_T}$, then its
  curvature form $\omega_{\phi_T}$ satisfies
  \[ \label{eq:te1} \omega_{\phi_T}^n = e^{-T\phi_T -\psi} \]
  in the sense of measures, where $\ddb\psi = \gamma$. Here $Z_{reg}$
  denotes the regular set of $Z$ in the complex analytic sense. 
\end{prop}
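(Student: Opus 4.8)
The plan is to pass to the limit in the complex Monge–Ampère equations satisfied by the $\omega_{t_k}$, using the uniform estimates that come from the non-collapsing and the holomorphic embeddings $F_k$, together with pluripotential theory on the normal variety $Z$. First I would rewrite equation \eqref{eq:conteq} as a Monge–Ampère equation on $M$: writing $\omega_{t_k} = \alpha + \ddb u_k$ for an appropriate normalization of the potentials $u_k$, and writing $\beta = \ddb f$, $(1-t_k)\alpha + \beta = \ddb h_k$ locally, the equation becomes $(\alpha + \ddb u_k)^n = e^{-t_k u_k - \psi_k}\, dV$ locally, where $\psi_k$ is a local potential for $(1-t_k)\alpha+\beta$ chosen so that the normalizing constants match up globally, and $dV$ is a fixed background volume form. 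The key point is that the $F_k$ realize $\omega_{t_k}$ as (a bounded perturbation of) the pullback $\ell^{-1}F_k^*\omega_{FS}$: more precisely, Tian's argument combined with the partial $C^0$-estimate furnished by \cite{LiuSz} gives a uniform bound on the Kähler potential of $\omega_{t_k}$ relative to $\ell^{-1}F_k^*\omega_{FS}$. Applying $g_k^{-1}\in PGL(N+1)$, this transplants the potentials to a fixed projective space and realizes $\omega_{t_k}$, up to uniformly bounded potential, as the restriction of a fixed Fubini–Study form to $g_k\cdot F_1(M)$.

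Next I would extract limits. After passing to a further subsequence, the relative potentials $u_k$ (now viewed on $g_k\cdot F_1(M)$, which Gromov–Hausdorff converges to $Z$) are uniformly bounded in $L^\infty$ and $\omega_{FS}$-plurisubharmonic, so they converge in $L^1_{loc}$ on $Z_{reg}$ to a bounded $\omega_{FS}|_Z$-plurisubharmonic function $u_\infty$; this defines the metric $e^{-\phi_T}$ on $L$ with locally bounded potentials, by taking $\phi_T = \phi_{FS} + u_\infty$ where $\phi_{FS}$ is the potential of the Fubini–Study metric. On the right-hand side, $(F_k)_*[(1-t_k)\alpha+\beta]\to\gamma$ weakly by construction, so the local potentials $\psi_k$ converge (in $L^1_{loc}$) to a local potential $\psi$ with $\ddb\psi = \gamma$; and $t_k\to T$. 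The convergence of the measures on the left is the crucial analytic step: one wants $(\alpha+\ddb u_k)^n \to (\omega_{\phi_T})^n$ weakly as measures on $Z_{reg}$. Since the $u_k$ are uniformly bounded, this is exactly the continuity of the Monge–Ampère operator along uniformly bounded, monotone or $L^1$-convergent sequences of psh functions — here I would invoke the Bedford–Taylor theory (in the form valid on normal analytic spaces, after possibly a log-resolution, cf. Demailly), noting that the uniform $L^\infty$-bound is what allows us to conclude, rather than needing capacity estimates. The right-hand sides $e^{-t_k u_k - \psi_k}\,dV$ converge weakly too, since $u_k\to u_\infty$ in $L^1$ and is uniformly bounded so the exponentials converge in $L^1_{loc}$, while $\psi_k\to\psi$ in $L^1$ and $e^{-\psi_k}\,dV$ is a fixed-mass family of measures converging weakly to $e^{-\psi}\,dV = $ (the measure with potential) $\gamma$. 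Matching the two limits gives $\omega_{\phi_T}^n = e^{-T\phi_T - \psi}$ on $Z_{reg}$ in the sense of measures.

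Finally, the fact that $\phi_T$ has \emph{locally bounded} potentials (globally on $Z$, not just on $Z_{reg}$) follows because the uniform $L^\infty$-bound on $u_k$ is a bound with respect to the ambient $\omega_{FS}$ on all of $\mathbf{P}^N$, hence survives across the singular locus of $Z$; that the resulting equation holds in the sense of measures across $Z_{sing}$ is not claimed and not needed — the statement is only about $Z_{reg}$. The main obstacle I anticipate is establishing the uniform relative potential bound (the partial $C^0$-estimate) in a form adapted to \emph{non-negative} rather than strictly positive $\beta$ and to the Gromov–Hausdorff limit being only normal; this is precisely where \cite{LiuSz} is needed, since it provides the embedding and the Lipschitz control with only a lower Ricci bound, which is all that \eqref{eq:conteq} supplies when $\beta\ge 0$. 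A secondary technical point is ensuring the Bedford–Taylor convergence is valid through the (codimension $\ge 2$) singularities of $Z$ when testing against forms supported near $Z_{sing}$; this is handled by the standard fact that bounded psh functions do not charge pluripolar, hence in particular codimension-$\ge 2$, sets, so the Monge–Ampère measures put no mass there and the convergence on $Z_{reg}$ suffices.
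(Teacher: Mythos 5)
Your global strategy (pass to the limit in the Monge--Amp\`ere equation using the partial $C^0$-estimate and Lipschitz maps furnished by \cite{LiuSz}) is the right skeleton, and the extraction of a bounded limit potential is handled correctly. However, both of the two key analytic limit-passages are wrong as stated, and the paper's proof is structured precisely to get around the point you brush past.

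First, on the left-hand side, the assertion that Bedford--Taylor theory gives weak convergence $(\alpha+\ddb u_k)^n\to(\omega_{\phi_T})^n$ from ``uniformly bounded plus $L^1_{loc}$ convergent'' psh functions is false. Continuity of the complex Monge--Amp\`ere operator requires monotone convergence, or convergence in capacity, or some other strengthening; there are standard counterexamples (Cegrell) of uniformly bounded psh sequences converging in $L^1$ whose Monge--Amp\`ere measures do not converge to the Monge--Amp\`ere of the limit. The uniform $L^\infty$ bound is \emph{not} a substitute for capacity control. The paper's proof does real work to upgrade the convergence: it introduces the analytic set $E_\kappa$ where the Lelong numbers of $\gamma$ are $\geq\kappa$, shows via \cite[Claim 4.3]{LiuSz} that points outside $E_\kappa$ are $\Psi(\kappa)$-regular in $Z$, and then proves the bi-H\"older Lemma~\ref{prop:biholder}, which together with the Laplacian identity $\Delta_{\omega_{t_k}}\phi_{t_k}=n$ and the gradient estimate gives \emph{uniform $C^\alpha$ convergence} of the potentials on $B\setminus E_\kappa$. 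It is this much stronger mode of convergence, not the $L^\infty$ bound, that licenses the convergence of the Monge--Amp\`ere measures.

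Second, on the right-hand side, the claim that $e^{-\psi_k}\,dV\rightharpoonup e^{-\psi}\,dV$ because $\psi_k\to\psi$ in $L^1$ and the total masses are fixed is also unjustified. The local potentials $\psi_{\alpha_k},\psi_{\beta_k}$ of the forms $\alpha,\beta$, read through the degenerating charts $z_{ki}$, can develop singularities in the limit, and the exponentials can concentrate mass. The paper handles this with the Demailly--Koll\'ar semicontinuity theorem~\cite{DK}, which requires a bound on the Lelong numbers of $\psi$ --- again furnished exactly by working on the complement of $E_\kappa$ for $\kappa$ small. Your proof never mentions Lelong numbers, so it has no mechanism to rule out such concentration.

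Finally, the obstruction you flag at the end (convergence ``through'' $Z_{sing}$) is not where the difficulty lies: the equation is only asserted on $Z_{reg}$. The real issue is the set $E_\kappa\subset Z_{reg}$ where the twist $\gamma$ has positive Lelong numbers. The paper first proves the equation on $B\setminus E_\kappa$ and then extends across $E_\kappa$ using that $E_\kappa$ is an analytic subvariety (Siu) of $\omega_T^n$-measure zero. This two-step structure --- restrict to the $\epsilon$-regular set determined by Lelong numbers, prove strong convergence there, then extend --- is the essential content that your proposal is missing.
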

\begin{proof}
  The metric on (a power of) $L$ is obtained by the partial
  $C^0$-estimate, as a limit of metrics $h_k$ on $L\to M_k$ that have
  curvature $\omega_{t_k}$. More concretely, the partial
  $C^0$-estimate implies that under our embeddings $F_k : M_k\to
  \mathbf{P}^N$, the pullback of the Fubini-Study metric is uniformly
  equivalent to $h_k$. Using this we can extract a limit metric on
  $\mathcal{O}(1)|_Z$ which will also be uniformly equivalent to the
  restriction of the Fubini-Study metric.

  Let us now consider a point $p\in Z_{reg}$ and a sequence $p_k\in
  M_k$ such that $p_k\to p$ under the Gromov-Hausdorff convergence. 
  We have a holomorphic
  chart $z_i$ on a neighborhood of $p$, and using the maps $F_k$ this
  gives rise to charts $z_{ki}$ on neighborhoods of $p_k\in M_k$ for
  large $k$,  converging to $z_i$. Using these charts we can view the
  metrics $\omega_{t_k}$ as being defined on a fixed ball
  $B\subset\mathbf{C}^n$. By the gradient estimate for holomorphic
  functions, we have a uniform bound $\omega_{t_k} >
  C^{-1}\omega_{Euc}$. In addition, by \cite[Proposition 3.1]{LiuSz}
  we can assume (shrinking the charts if necessary) 
  that we have uniformly bounded K\"ahler potentials $\phi_{t_k}$ for the
  $\omega_{t_k}$. Let us denote by $\alpha_k, \beta_k$ the forms
  corresponding to $\alpha, \beta$ on $M$. Equation~\eqref{eq:conteq}
  implies that $\alpha_k, \beta_k$ have potentials $\psi_{\alpha_k},
  \psi_{\beta_k}$ satisfying the equation
    \[ \label{eq:okeq}  \omega_{t_k}^n = e^{-t_k\phi_{t_k} -
        (1-t_k)\psi_{\alpha_k} - \psi_{\beta_k}}, \]
  i.e.
  \[ \label{eq:Riceq1}
       \mathrm{Ric}(\omega_{t_k}) = t_k\omega_{t_k} + (1-t_k)\alpha_k
       + \beta_k. \]
  Our goal is to be able to pass this equation to the limit as
  $k\to\infty$, i.e. $t_k\to T$.

  Let us observe first that since $\alpha, \beta$ are fixed forms on
  $M$, using the lower bound $\omega_{t_k} > C^{-1}\omega_{Euc}$, 
  we have a uniform bound
  \[ \int_B \big[ (1-t_k)\alpha_k + \beta_k \big] \wedge
    \omega_{Euc}^{n-1} < C. \]
  It follows that we can take a weak limit
  \[ \gamma = \lim_{k\to\infty} (1-t_k)\alpha_k + \beta_k. \]
  From \eqref{eq:okeq}, and the lower bound for
  $\omega_{t_k}$ we have uniform upper bounds for
  $(1-t_k)\psi_{\alpha_k} + \psi_{\beta_k}$. These psh functions can
  also not converge to $-\infty$ everywhere as $k\to\infty$, since the
  volume of $B$ with respect to the metric $\omega_{t_k}$ is bounded
  above. It follows that up to choosing a subsequence we can extract a
  limit
  \[ (1-t_k)\psi_{\alpha_k} + \psi_{\beta_k} \to \psi, \text{ in }
    L^1_{loc}. \]
  We then necessarily have $\gamma = \ddb\psi$.

  Let $\kappa > 0$, and denote by $E_\kappa$ the set where the Lelong
  numbers of $\gamma$ are at least $\kappa$. By Siu's
  theorem~\cite{Siu} $E_\kappa$ is a subvariety in $B$.
  From \cite[Claim 4.3]{LiuSz}, and the
  subsequent argument, it follows that for any $q\not\in E_\kappa$, we
have $V_{2n} - \lim_{r\to 0} r^{-2n}\mathrm{vol}(B(q,r)) <
\Psi(\kappa)$, where the volume is measured using the limit metric on 
$Z$.   Here, and below, $\Psi(\kappa)$
  denotes a function converging to zero as $\kappa\to 0$, which may
  change from line to line.
In other words in the limit space $Z$ the complement of
$E_\kappa$  is contained in the $\epsilon$-regular set for $\epsilon =
\Psi(\kappa)$.
 
  Suppose now that $q\not\in E_\kappa$, and $\delta$ is sufficiently
  small so that $V_{2n} - \delta^{-2n}\mathrm{vol}(B(q,\delta)) <
  \epsilon$, where $V_{2n}$ is the volume of the Euclidean unit ball.
  Then we can apply Lemma~\ref{prop:biholder} below
  to see that on $B(q,\delta)$ the metrics $\omega_{t_k}$ are
  bi-H\"older equivalent to $\omega_{Euc}$. On these balls the
  K\"ahler potentials $\phi_{t_k}$ satisfy uniform gradient estimates
  with respect to $\omega_{t_k}$, since
  $\Delta_{\omega_{t_k}}\phi_{t_k} = n$, and so the $\phi_{t_k}$
  satisfy uniform H\"older bounds with respect to $\omega_{Euc}$. It
  follows from this that up to
  choosing a subsequence we can find a limit $\phi_{t_k} \to \phi_T$
  in $C^\alpha_{loc}(B \setminus E_\kappa)$, and $\phi_T$ is uniformly
  bounded on $B$. In particular for $\omega_T = \ddb\phi_T$,
  the measures $\omega_{t_k}^n$ converge weakly to $\omega_T^n$ on
  $B\setminus E_\kappa$. 

  To derive the required equation \eqref{eq:te1}, we note that on
  $B\setminus E_\kappa$ we have 
 \[e^{-(1-t_k)\psi_{\alpha_k} -
    \psi_{\beta_k}} \to e^{-\psi} \text{ in } L^1_{loc}.\]
 From the
  semicontinuity theorem of Demailly-Koll\'ar~\cite{DK} this follows
  if we bound the Lelong numbers of $\psi$, which will be the case if
  $\kappa$ is sufficiently small. It follows that on $B\setminus
  E_\kappa$ we have an equality of measures $\omega_T^n = e^{-T\phi_T
    -\psi}$, and since $E_\kappa$ has zero measure with respect to
  $\omega_T^n$, the equality holds on $B$ as well. 
\end{proof}

We used the following lemma in the argument.
\begin{lem}\label{prop:biholder}
  Suppose that $B(p,1)$ is a unit ball in a K\"ahler manifold with
  $\mathrm{Ric} \geq 0$, together with holomorphic coordinates $z_i$
  that give an $\epsilon$-Gromov-Hausdorff approximation of $B(p,1)$
  to the Euclidean unit ball $B(0,1)\subset \mathbf{C}^n$.
  There exists an $\alpha > 1 - \Psi(\epsilon)$ and $C > 0$ such that
  for $q, q'\in B(p,1/2)$ we have
  \[ d(q,q') \leq C|z(q) - z(q')|^\alpha. \]
  As above, $\Psi(\epsilon)$
  denotes a function converging to zero as $\epsilon\to 0$, which may  change from line to line.
\end{lem}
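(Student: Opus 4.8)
The plan is to deduce this from the gradient estimate for harmonic (pluriharmonic) functions on manifolds with nonnegative Ricci curvature, combined with the fact that the coordinate functions $z_i$ are holomorphic and hence have pluriharmonic real and imaginary parts. Since $B(p,1)$ is $\epsilon$-close in Gromov--Hausdorff distance to the Euclidean ball and carries $\mathrm{Ric}\geq 0$, Cheeger--Colding theory tells us that $B(p,1)$ is $\Psi(\epsilon)$-regular, so there is a definite lower volume ratio bound on balls centered in $B(p,1/2)$; in particular a uniform Poincar\'e inequality and a doubling property hold there with constants depending only on $n$ and $\Psi(\epsilon)$. On such a space the Cheng--Yau gradient estimate gives, for a harmonic function $u$ on $B(q,2s)\subset B(p,1)$,
\[ \sup_{B(q,s)} |\nabla u| \leq \frac{C(n)}{s}\,\mathrm{osc}_{B(q,2s)} u. \]
Applying this to the real and imaginary parts of each coordinate $z_i$ (which are pluriharmonic, hence harmonic), and using the $\epsilon$-Gromov--Hausdorff approximation to control $\mathrm{osc}\, z_i$ by $C(s + \epsilon)$ on a ball of radius $2s$, we obtain $|\nabla z| \leq C$ on $B(p,1/2)$ for $\epsilon$ small, i.e. the map $z$ is uniformly Lipschitz. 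This gives one inequality; the content of the lemma is the reverse H\"older bound $d(q,q')\leq C|z(q)-z(q')|^\alpha$.

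For the reverse direction I would argue by a segment/telescoping argument along a minimal geodesic $\sigma$ from $q$ to $q'$. Set $\rho = |z(q) - z(q')|$. If $d(q,q')\leq \rho$ there is nothing to prove (take $\alpha$ close to $1$ and $C\geq 1$, using that $\rho\leq C$ is bounded). Otherwise, cover $\sigma$ by a chain of balls $B(x_j, r)$ with $r$ comparable to $\rho$, of cardinality $\sim d(q,q')/\rho$. On each such ball the Cheng--Yau estimate applied to $z_i$, together with the standard reverse-Poincar\'e / De Giorgi--Nash--Moser type oscillation estimate on PI spaces with $\mathrm{Ric}\geq 0$ (or more simply the parabolic/elliptic Harnack inequality for $|\nabla z_i|$, which is a subsolution of a uniformly elliptic equation), forces $|\nabla z|$ to be bounded \emph{below} on a definite portion of each ball unless $z$ is nearly constant there. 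Quantitatively, on a $\Psi(\epsilon)$-regular ball the normalized $L^2$ norm of $dz - dz_{\mathrm{Euc}}$ is $\Psi(\epsilon)$-small (this is exactly the statement that $z$ is a $\Psi(\epsilon)$-splitting map, as in Cheeger--Colding and as used in \cite{DS1, LiuSz}), so along the geodesic the total variation of $z$ is comparable to the length of $\sigma$ up to a factor $1-\Psi(\epsilon)$, giving $|z(q)-z(q')|\geq (1-\Psi(\epsilon))\,d(q,q')$ directly --- but that would be a \emph{Lipschitz} lower bound with no loss, which is too strong near the (a priori possible) bad set. The honest statement must only be H\"older because the splitting estimate degenerates as one approaches the non-regular part, so instead one iterates on dyadic scales: on the ball $B(q, 2^{-j})$ the map $z$ is a $\delta_j$-splitting with $\delta_j \to$ something controlled, and summing the geometric-type series of scale-dependent distortions produces the exponent $\alpha = 1 - \Psi(\epsilon)$.

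The main obstacle is precisely this last point: controlling how the splitting/distortion estimate for the coordinate map $z$ degrades across scales as one moves toward points where the volume ratio has not yet reached its near-Euclidean value, and extracting from the resulting scale-by-scale bound a clean H\"older exponent that tends to $1$ as $\epsilon\to 0$. This is the technical heart and is where one must invoke the quantitative Cheeger--Colding almost-splitting theorem (or the harmonic-coordinate estimates of Cheeger--Colding--Tian / the versions in Donaldson--Sun~\cite{DS1}) in an effective form; the Lipschitz bound, the doubling/Poincar\'e structure, and the reduction to an iteration are all comparatively routine once that ingredient is in place. I would organize the write-up as: (i) regularity and functional inequalities from $\mathrm{Ric}\geq 0$ plus GH-closeness; (ii) Lipschitz bound via Cheng--Yau; (iii) the $\delta$-splitting property of $z$ at each scale with $\delta = \Psi(\epsilon)$ at the top scale; (iv) the dyadic iteration yielding the H\"older lower bound with exponent $1-\Psi(\epsilon)$.
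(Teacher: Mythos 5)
Your proposal has the right global shape — a dyadic, scale-by-scale argument estimating the distortion of the coordinate map $z$ — but the key per-scale step is exactly where the proposal stops, and the concern you raise there is somewhat misdirected. You worry that the splitting estimate for $z$ "degrades across scales as one moves toward points where the volume ratio has not yet reached its near-Euclidean value," and you leave it open how to control this degradation. In the paper's argument there is no such degradation to control: by Colding's volume convergence theorem and Bishop--Gromov monotonicity, the volume ratio at any interior point only improves as the radius shrinks, so every rescaled ball $2^kB(p,2^{-k})$ is again a $\Psi(\epsilon)$-Gromov--Hausdorff approximation of the Euclidean ball, with the \emph{same} $\Psi(\epsilon)$. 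This uniformity is the crux, and it is precisely what makes the exponent come out as $1-\Psi(\epsilon)$ rather than something worse.

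The other genuine gap is the per-scale mechanism itself. You appeal to $L^2$-splitting estimates and Cheng--Yau, but the paper instead produces, at each scale, a fresh set of holomorphic coordinates $w$ on $2^kB(p,2^{-k})$ (via \cite[Theorem 2.1]{LiuSz}) giving a $\Psi(\epsilon)$-GH approximation, and then compares $w$ with the rescaled original coordinates $z'=(2+\delta)^kz$ by the classical Schwarz lemma: since by the inductive hypothesis $z'$ maps the ball onto a region containing the Euclidean unit ball, the Schwarz lemma gives $|w|\leq(1+\Psi(\epsilon))|z'|$, and the GH-approximation property of $w$ then forces $|z'|\geq(1-\Psi(\epsilon))/2$ outside the half-ball. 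This yields, at each step, a multiplicative loss of exactly $(2+\Psi(\epsilon))^{-1}$ against the geometric factor $2^{-1}$, and the H\"older exponent $\alpha=\log 2/\log(2+\delta)$ falls out immediately. Your Lipschitz upper bound on $z$ via Cheng--Yau is correct but is the wrong direction (it bounds $|z(q)-z(q')|$ by $d(q,q')$, not the reverse) and plays no role; and the $L^2$-splitting route, while plausible, would still require you to convert an $L^2$ bound on $dz-dz_{\mathrm{Euc}}$ into a pointwise lower bound on $|z|$ along a geodesic, which is not supplied. As it stands the proposal correctly identifies the iteration structure and the role of Cheeger--Colding regularity, but the heart of the proof --- the Schwarz-lemma comparison with a newly produced chart at each scale, powered by Bishop--Gromov giving scale-uniform regularity --- is absent.
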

\begin{proof}
  We can assume that $z(p)=0$. It is enough to prove that for any
  $\delta > 0$, if $\epsilon$ is sufficiently small, then for all $k >
  0$ and $q\not\in B(p, 2^{-k})$, we have $|z(q)| >
  (2+\delta)^{-k}$. We prove this by induction.

  Suppose that we
  have shown that $|z| > (2+\delta)^{-k}$ outside of
  $B(p,2^{-k})$. Denote by $2^k B(p,2^{-k})$ the same ball scaled up
  to unit size. By Colding's volume convergence theorem~\cite{Col} and
  the Bishop-Gromov monotonicity, together with
  \cite[Theorem 2.1]{LiuSz}, we have holomorphic coordinates $w$ on
  this ball, giving a $\Psi(\epsilon)$-Gromov-Hausdorff approximation
  to the Euclidean unit ball. We can assume that $w(p)=0$. Let us also
  use the coordinates $z' = (2+ \delta)^kz$, which map our ball onto a
  region containing the Euclidean unit ball. Viewing $w$ as a function
  of $z$, the Schwarz lemma implies that $|w| \leq (1+\Psi(\epsilon))|z'|$
  on the unit $z'$-ball, and so in particular, using that $w$ is a
  Gromov-Hausdorff approximation, we have $|z'| \geq  (1
  -\Psi(\epsilon)) / 2$ outside of the ball $2^kB(p,
  2^{-k-1})$. Scaling back, this means that $|z| \geq
  (2+\Psi(\epsilon))^{-1} (2+\delta)^{-k}$ outside of $B(p,
  2^{-k-1})$.  We then just need to choose $\epsilon$ small enough to
  make $\Psi(\epsilon)<\delta$, and the inductive step follows. 
\end{proof}

\subsection{The Ding functional and the Futaki invariant}
We will next use the existence of a twisted K\"ahler-Einstein metric as in
Proposition~\ref{prop:limiteq} to deduce the vanishing of the twisted
Futaki invariant, and the reductivity of the automorphism group. 

Let $Z\subset\mathbf{P}^N$ be a normal variety, together with the following
additional data. We have a $\mathbf{Q}$-line bundle $L$ on $Z$ (a power of
which is just $\mathcal{O}(1)$), and a locally bounded metric
$e^{-\phi_0}$ on $L$. In addition we have a closed positive current $\gamma$
on $Z$. We say that these define a twisted K\"ahler-Einstein metric if
the conclusion of Proposition~\ref{prop:limiteq} holds, i.e. 
locally on $Z_{reg}$ we have the equation $\omega_{\phi_0}^n =
e^{-T\phi_0 - \psi}$, where $\ddb\psi = \gamma$. In terms of this we
can define the twisted Ding functional on the space of all metrics $e^{-\phi}$
with locally bounded potentials. Abusing notation slightly, we will
denote by $e^{-T\phi - \psi}$ the measure
\[ e^{-T\phi - \psi} = e^{-T(\phi-\phi_0)} \omega_{\phi_0}^n. \]
Note that while $\phi, \phi_0$ are only locally defined in terms of
trivializations of $L$,
$\phi-\phi_0$ is a globally defined bounded function on
$Z$. 

We have the Monge-Amp\`ere energy functional $E$, defined by its variation
\[ \delta E(\phi) = \frac{1}{V} \int_Z \delta\phi\, \omega_\phi^n, \]
where $V$ is the volume of $Z$ with respect to $\omega_\phi$, 
and we define the twisted Ding functional~\cite{Ding88} by
\[ \mathcal{D}(\phi) = -TE(\phi) - \log\left(\int_Z
  e^{-T\phi-\psi}\right). \]
The variation of $\mathcal{D}$ is
\[ \delta\mathcal{D}(\phi) = -TV^{-1}\int_Z \delta\phi\,\omega_\phi^n -
\frac{\int_Z -T(\delta\phi) e^{-T\phi-\psi}}{\int_Z
  e^{-T\phi-\psi}}, \]
and so the critical points satisfy
\[  \omega_\phi^n = C e^{-T\phi-\psi}. \]
Up to changing $\phi$ by addition of a constant, this is the twisted
KE equation as required. 

The convexity of the twisted Ding functional follows 
exactly Berndtsson's argument in \cite{Ber13} (see also \cite{DSz}), 
and so in particular if there is a critical
point, then $\mathcal{D}$ is bounded below. As in \cite{CDS,DSz}, the
key consequences of this convexity are the reductivity of the
automorphism group of $(Z,\gamma)$, and the vanishing of a twisted
Futaki invariant.

The reductivity of the automorphism group is a generalization of 
Matsushima's theorem for K\"ahler-Einstein metrics~\cite{Mat}
(see also \cite{Ber13, BBEGZ, BWN, CDS3,DervanSektnan}). 
Following \cite{DSz}, we define the Lie algebra stabilizer
of $(Z,\gamma)$, as a subalgebra of $\mathfrak{sl}(N+1,\mathbf{C})$ by
\[ \mathfrak{g}_{Z,\gamma} = \{ w\in H^0(TZ)\,:\, \iota_w
  \gamma=0\}. \]
We then have, following \cite{CDS3} (see also \cite[Proposition 7]{DSz})
\begin{prop}
  Suppose that $Z$ admits a twisted KE metric as above. Then
  $\mathfrak{g}_{Z,\gamma}$ is reductive. 
\end{prop}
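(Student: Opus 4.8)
The plan is to follow the standard Matsushima–Lichnerowicz–type argument adapted to the twisted setting, exactly in the spirit of \cite{CDS3} and \cite[Proposition 7]{DSz}. First I would observe that $\mathfrak{g}_{Z,\gamma}$ is the complexification of the real Lie algebra of holomorphic vector fields whose imaginary part is Killing with respect to the twisted KE metric $\omega_{\phi_T}$ and which annihilate $\gamma$; indeed, since $\omega_{\phi_T}$ solves a twisted KE equation, a Bochner/Hodge decomposition on $Z_{reg}$ shows that any $w\in H^0(TZ)$ can be written as $w = \nabla^{1,0} f$ for a complex-valued function $f$ which is an eigenfunction of the relevant twisted Laplacian; the condition $\iota_w\gamma = 0$ is then compatible with splitting $f$ into its real and imaginary parts. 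Here one must be careful that these manipulations are a priori only valid on the regular locus $Z_{reg}$, so I would need the now-standard fact (from \cite{BBEGZ} and the potential-theoretic estimates already invoked in the proof of Proposition~\ref{prop:limiteq}) that bounded psh potentials and the locally bounded metric $e^{-\phi_T}$ control the behaviour across the singular set, whose codimension is at least two.

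Concretely, I would argue via the convexity of the twisted Ding functional $\mathcal{D}$. Since $Z$ admits a twisted KE metric, $\mathcal{D}$ attains its minimum, and by Berndtsson's theorem (invoked above, following \cite{Ber13}) $\mathcal{D}$ is convex along weak geodesics in the space of locally bounded metrics on $L$. For any $w\in\mathfrak{g}_{Z,\gamma}$, the real one-parameter subgroup generated by $\mathrm{Im}(w)$ acts by isometries fixing $\gamma$ and hence fixes $\mathcal{D}$; the subgroup generated by $\mathrm{Re}(w)$ moves the potential along a geodesic ray, and the condition $\iota_w\gamma=0$ guarantees that the twisting term $\psi$ transforms correctly so that $\mathcal{D}$ is in fact \emph{affine} along this ray. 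Berndtsson's rigidity statement — that affine loci of $\mathcal{D}$ come from holomorphic automorphisms — then shows that the flow of $\mathrm{Re}(w)$ also consists of automorphisms of $(Z,\gamma)$. Thus the full holomorphic vector field $w$ integrates to a one-parameter subgroup of $\mathrm{Aut}(Z,\gamma)$, and conjugation-invariance of $\mathfrak{g}_{Z,\gamma}$ under the resulting (real-reductive) group identifies it as the complexification of a compact Lie algebra, hence reductive.

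An alternative, perhaps cleaner, route is the direct Matsushima computation: decompose $\mathfrak{g}_{Z,\gamma} = \mathfrak{k}\oplus J\mathfrak{k}$, where $\mathfrak{k}$ consists of the fields with Killing imaginary part, and check that $\mathfrak{k}$ is a genuine real Lie subalgebra (closed under bracket). The obstruction to this in the untwisted case is handled by the Bochner formula for the KE equation; in the twisted case the extra term involving $\gamma$ is precisely killed by the defining condition $\iota_w\gamma = 0$ for $w\in\mathfrak{g}_{Z,\gamma}$, together with $\overline{\partial}$-closedness of $\iota_w\omega_{\phi_T}$. I expect the main obstacle to be purely technical rather than conceptual: justifying the integration by parts and the Bochner identities on the singular variety $Z$, where the metric degenerates near $Z_{sing}$ and $\gamma$ is only a positive current. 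This is dealt with by a cutoff argument exploiting that $\mathrm{codim}_{\mathbf{C}} Z_{sing}\geq 2$ together with the local boundedness of $\phi_T$ and the uniform equivalence of $\omega_{\phi_T}$ to the Fubini–Study metric established in Proposition~\ref{prop:limiteq} — exactly as in the references \cite{BBEGZ, DSz, CDS3}, so I would cite those rather than reproduce the estimates.
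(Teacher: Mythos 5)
Your proposal correctly identifies the mechanism: the paper in fact gives no proof here, simply invoking \cite{CDS3} and \cite[Proposition 7]{DSz}, and both of those references run exactly the argument you sketch — Berndtsson convexity and rigidity for the twisted Ding functional, together with cutoff estimates across $Z_{\mathrm{sing}}$ enabled by $\operatorname{codim}_{\mathbf{C}} Z_{\mathrm{sing}}\geq 2$ and the local boundedness of the potentials. So the approach matches the paper's intent, and your second ``direct Bochner'' route is precisely the alternative that the cited references deliberately avoid because the integration by parts on a singular $Z$ is heavier.

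One point of logic to tighten in the first Ding-functional paragraph: you write that for an arbitrary $w\in\mathfrak{g}_{Z,\gamma}$ the flow of $\mathrm{Im}(w)$ acts by isometries of $\omega_{\phi_T}$ and hence fixes $\mathcal{D}$. That is exactly the conclusion of the Matsushima decomposition and cannot be assumed at the start. In \cite{CDS3,DSz} the geodesic is produced from the flow of $\mathrm{Re}(w)$ alone; the Ding functional is affine along it because of the cocycle identity for $\mathcal{D}$ under $\mathrm{Aut}(Z,\gamma)$ (here the hypothesis $\iota_w\gamma=0$, hence $L_w\gamma=0$, enters to show the $\log\int e^{-T\phi-\psi}$ term transforms correctly), \emph{not} because $\mathrm{Im}(w)$ is Killing. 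Berndtsson's uniqueness/rigidity then shows the geodesic is generated by a holomorphic vector field whose complex Hamiltonian is real, i.e.\ only afterwards does one learn that $\mathrm{Im}(w)$ can be taken to be a Killing field, giving the splitting $\mathfrak{g}_{Z,\gamma}=\mathfrak{k}\oplus J\mathfrak{k}$ and hence reductivity. With that reordering your argument is the intended one.
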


Following
Chen-Donaldson-Sun~\cite{CDS} we also apply the convexity of the
twisted Ding functional to deduce the vanishing of 
a twisted Futaki invariant on $Z$. For this we consider the variation of
$\mathcal{D}$ along a 1-parameter group of automorphisms which fixes
the twisting current $\gamma$. If the automorphisms are generated by a
vector field $v$ with
Hamiltonian $\theta$, then the variation of $\phi$ is $\theta$, so we get
\[ \label{eq:1} \mathrm{Fut}_{T,\gamma}(Z,v) = -TV^{-1}\int_Z \theta
  \omega_\phi^n + T\frac{\int_Z \theta e^{-T\phi- \psi}}{\int_Z
    e^{-T\phi - \psi}}. \]
As a result we have the following. 
\begin{prop}\label{prop:Fut0}
Suppose that $Z$ admits a twisted KE metric as above, and let
$e^{-\phi}$ be a metric on $L$ with locally bounded
potentials. Suppose that $v$ is a holomorphic vector field on $Z$ with
a lift to $L$, such that the imaginary part of $v$ acts by isometries
on $L$, and so that $\iota_v\gamma = 0$. 
Let $\theta$ denote a Hamiltonian for $v$, i.e. $L_v\omega_\phi =
\ddb\theta$.  Then $\mathrm{Fut}_{T, \gamma}(Z,v)=0$, where
$\mathrm{Fut}_{T,\gamma}(Z,v)$ is defined as in \eqref{eq:1}. 
\end{prop}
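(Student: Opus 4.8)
The plan is to follow the Chen--Donaldson--Sun scheme, using the convexity of the twisted Ding functional $\mathcal D$ together with the fact that it is bounded below. The latter is immediate from the remark preceding the proposition: the twisted K\"ahler--Einstein metric $e^{-\varphi_T}$ of Proposition~\ref{prop:limiteq} is a critical point of $\mathcal D$, and Berndtsson's convexity then forces $\mathcal D$ to attain its minimum there, hence to be bounded below. So it remains to exhibit, through the given metric, a one-parameter family along which $\mathcal D$ has derivative $\mathrm{Fut}_{T,\gamma}(Z,v)$ and is bounded below.

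Next I would set up that family. Using the lift of $v$ to $L$, and the hypothesis that $\mathrm{Im}(v)$ acts by isometries on $L$, the real one-parameter group $\sigma_t$ generated by $v$ acts on $(Z,L)$, and $\varphi_t:=\sigma_t^*\varphi$ is a genuine path of metrics on $L$ with locally bounded potentials, satisfying $\dot\varphi_0=\theta$. By the computation recorded in \eqref{eq:1} we have $\frac{d}{dt}\big|_{t=0}\mathcal D(\varphi_t)=\mathrm{Fut}_{T,\gamma}(Z,v)$, so the proposition reduces to showing that this derivative vanishes. The key claim is that $t\mapsto\mathcal D(\varphi_t)$ is in fact \emph{affine}. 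The Monge--Amp\`ere energy $E(\varphi_t)$ is affine in $t$ because $\varphi_t$ is the orbit of a holomorphic flow and hence a geodesic in the space of potentials, along which the second variation of $E$ vanishes. For the twisting term, the hypothesis $\iota_v\gamma=0$ gives $\sigma_t^*\gamma=\gamma$, so $\sigma_t^*\psi-\psi$ is a globally defined pluriharmonic function on the normal projective variety $Z$, hence constant, and by additivity in $t$ it equals $at$ for some $a\in\mathbf R$. Consequently $\int_Z e^{-T\varphi_t-\psi}=e^{at}\int_Z e^{-T\varphi-\psi}$, so $-\log\int_Z e^{-T\varphi_t-\psi}$ is affine as well, and therefore so is $\mathcal D(\varphi_t)$. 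An affine function on all of $\mathbf R$ that is bounded below is constant, so $\mathrm{Fut}_{T,\gamma}(Z,v)=\frac{d}{dt}\big|_{t=0}\mathcal D(\varphi_t)=0$. (Equivalently, affineness shows that $\mathrm{Fut}_{T,\gamma}(Z,v)$ does not depend on the choice of $\varphi$, and one may then simply evaluate \eqref{eq:1} at $\varphi=\varphi_T$, where the twisted KE equation $\omega_{\varphi_T}^n=Ce^{-T\varphi_T-\psi}$ makes the two terms cancel identically.)

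The main obstacle is not the formal argument but making it rigorous on the singular normal variety $Z$ with metrics having only locally bounded potentials: the notion of geodesic, the affineness of $E$ along the holomorphic orbit, and Berndtsson's convexity (together with its equality case) must all be interpreted in the pluripotential-theoretic sense. This is carried out exactly as in the proof of convexity of $\mathcal D$ quoted earlier, following Berndtsson~\cite{Ber13} and \cite{DSz}: one argues on the regular locus $Z_{reg}$ and checks that the singular set, which has complex codimension at least two and is null for all the measures appearing, does not affect the integrations by parts or the convexity estimate. A secondary technical point is the identification of $\sigma_t^*\psi-\psi$ with a constant, which requires extending $\psi$ compatibly with the flow across $Z_{sing}$; this again uses the normality of $Z$.
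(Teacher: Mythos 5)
Your proposal is correct and matches the paper's intended argument. The paper does not in fact write out a separate proof of Proposition~\ref{prop:Fut0}; it simply records the variational formula \eqref{eq:1} and states ``as a result we have the following,'' relying on the preceding remark that Berndtsson convexity plus the existence of the critical point $\phi_T$ forces $\mathcal D$ to be bounded below. Your write-up fills in exactly the steps the authors leave implicit: the orbit $\phi_t=\sigma_t^*\phi$ is a geodesic so $E(\phi_t)$ is affine, the hypothesis $\iota_v\gamma=0$ makes $\sigma_t^*\psi-\psi$ a pluriharmonic (hence constant) function so the second term of $\mathcal D$ is also affine, and an affine, bounded-below function of $t\in\mathbf R$ is constant, giving $\mathrm{Fut}_{T,\gamma}(Z,v)=\frac{d}{dt}\big|_{t=0}\mathcal D(\phi_t)=0$. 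Your parenthetical alternative (independence of the formula from $\phi$, then evaluate at $\phi_T$ where the twisted KE equation makes the two terms cancel) is likewise a valid shortcut. No gaps.
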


As in \cite{DSz},  we need to relate this formula to the ``untwisted''
Donaldson-Futaki 
invariant. A new difficulty here is that the metric $\omega$ is not in
$c_1(Z)$, and so the Donaldon-Futaki invariant can not be expressed in terms of
the Ding functional. Instead we use the differential geometric formula
given in Proposition~\ref{prop:DFformula}.

Let $e^{-\phi}$ denote the restriction of the Fubini-Study metric to $L$
on $Z\subset\mathbf{P}^N$, and $\omega_\phi$ its curvature. We can use
a method similar to Ding-Tian~\cite{DT} to give a more differential
geometric formula for the twisted Futaki invariant. The vector field $v$ is
given by the restriction of a holomorphic vector field on
$\mathbf{P}^N$, and $\theta$ is the restriction to $Z$ of a smooth
function on $\mathbf{P}^N$. It follows that we have uniform bounds
$|\theta|, |\nabla\theta|, |\Delta\theta| < C$ on $Z_{reg}$, where we
are taking the gradient and Laplacian using the metric $\omega_\phi$
on $Z_{reg}$. In addition we have an upper bound
$\mathrm{Ric}(\omega_\phi) < C\omega_\phi$ on $Z_{reg}$, and so the
current $C\omega_\phi - [\mathrm{Ric}(\omega_\phi)-\gamma]$ is
positive for a sufficiently large constant $C$. 
\begin{prop}
We have the equality
\[ -TV^{-1}\int_Z \theta
  \omega_\phi^n + &T\frac{\int_Z \theta e^{-T\phi-\psi}}{\int_Z e^{-T\phi - \psi}} \\ &=
  -nV^{-1}\int_{Z} \theta(\mathrm{Ric}(\omega_\phi) - T\omega_\phi -
  \gamma)\wedge \omega_\phi^{n-1}. \]
\end{prop}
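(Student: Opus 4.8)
The plan is to reduce both sides of the claimed equality to the same quantity, by introducing a potential for the twisted Ricci curvature of $\omega_\phi$ and then integrating by parts; every integration by parts will be carried out over $Z_{reg}$ and justified exactly as in the proof of Proposition~\ref{prop:DFformula}, using the normality of $Z$ together with the uniform bounds on $\theta,\nabla\theta,\Delta\theta$ and the positivity of the current $C\omega_\phi-(\mathrm{Ric}(\omega_\phi)-\gamma)$ recorded just above.

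First I would record the basic identity for the twisted Ricci potential. In the convention $e^{-T\phi-\psi}=e^{-T(\phi-\phi_0)}\omega_{\phi_0}^n$, with $\phi_0$ the twisted KE potential, the twisted KE equation $\mathrm{Ric}(\omega_{\phi_0})=T\omega_{\phi_0}+\gamma$ gives $-\ddb\log\big(e^{-T\phi-\psi}\big)=T\omega_\phi+\gamma$ as currents on $Z_{reg}$. Hence, if $f$ denotes the locally integrable function on $Z_{reg}$ defined by $\omega_\phi^n=e^{-f}\,e^{-T\phi-\psi}$, then $\mathrm{Ric}(\omega_\phi)=-\ddb\log\omega_\phi^n=\ddb f+T\omega_\phi+\gamma$, i.e. $\mathrm{Ric}(\omega_\phi)-T\omega_\phi-\gamma=\ddb f$. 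Substituting this into the right-hand side of the proposition, integrating by parts, then using $\ddb\theta=L_v\omega_\phi$ so that $n\,\ddb\theta\wedge\omega_\phi^{n-1}=L_v(\omega_\phi^n)$, and finally $\int_Z L_v(\,\cdot\,)=0$ for top-degree forms, gives
\[
-nV^{-1}\int_Z\theta\big(\mathrm{Ric}(\omega_\phi)-T\omega_\phi-\gamma\big)\wedge\omega_\phi^{n-1}
&= -nV^{-1}\int_Z\theta\,\ddb f\wedge\omega_\phi^{n-1}
= -nV^{-1}\int_Z f\,\ddb\theta\wedge\omega_\phi^{n-1} \\
&= -V^{-1}\int_Z f\,L_v(\omega_\phi^n) = V^{-1}\int_Z v(f)\,\omega_\phi^n .
\]

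Next I would use the defining relation $\omega_\phi^n=e^{-f}\,e^{-T\phi-\psi}$ once more. Since $v(f)\,e^{-f}=-v(e^{-f})$ and $\int_Z L_v\big(e^{-f}\,e^{-T\phi-\psi}\big)=0$, the last integral equals $V^{-1}\int_Z e^{-f}\,L_v\big(e^{-T\phi-\psi}\big)$. The crucial point — and the only place the hypothesis $\iota_v\gamma=0$ is used — is that the measure $e^{-T\phi-\psi}$ has curvature $T\omega_\phi+\gamma$ with $\iota_v(T\omega_\phi+\gamma)=\sqrt{-1}\,\dbar(T\theta)$, so that $T\theta$ is a Hamiltonian for $v$ with respect to it; by the standard identity relating the $\dbar$ of the logarithmic derivative of a measure $\mu$ to $-\iota_v$ of its curvature form $-\ddb\log\mu$, the function $L_v\big(e^{-T\phi-\psi}\big)/e^{-T\phi-\psi}$ differs from $-T\theta$ by a constant $c$, i.e. $L_v\big(e^{-T\phi-\psi}\big)=(c-T\theta)\,e^{-T\phi-\psi}$. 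Combining this with $e^{-f}\,e^{-T\phi-\psi}=\omega_\phi^n$,
\[
-nV^{-1}\int_Z\theta\big(\mathrm{Ric}(\omega_\phi)-T\omega_\phi-\gamma\big)\wedge\omega_\phi^{n-1}
&= V^{-1}\int_Z(c-T\theta)\,\omega_\phi^n \\
&= c-TV^{-1}\int_Z\theta\,\omega_\phi^n .
\]
Finally, integrating the identity $L_v\big(e^{-T\phi-\psi}\big)=(c-T\theta)\,e^{-T\phi-\psi}$ over $Z$ and using $\int_Z L_v(\,\cdot\,)=0$ once more identifies the constant as $c=T\int_Z\theta\,e^{-T\phi-\psi}/\int_Z e^{-T\phi-\psi}$, and substituting this into the previous display reproduces exactly the left-hand side of the proposition.

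The one genuinely delicate point is the justification of the various integrations by parts across $Z_{sing}$. I expect this to follow the argument in the proof of Proposition~\ref{prop:DFformula} verbatim: one multiplies by a cutoff function supported away from $Z_{sing}$, bounds the resulting error terms using the complex-codimension-two estimate on $Z_{sing}$ together with the uniform control on $\theta$ and on the positive current $C\omega_\phi-(\mathrm{Ric}(\omega_\phi)-\gamma)$, and lets the cutoff increase to $1$. Everything else is a formal manipulation with Lie derivatives and Hamiltonians.
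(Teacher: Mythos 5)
Your formal manipulations reproduce, in a slightly reorganized form, exactly the identities the paper derives: your $f$ is the paper's twisted Ricci potential $u$ (defined by $e^{-T\phi-\psi-u}=\omega_\phi^n$); your claim that $L_v(e^{-T\phi-\psi})/e^{-T\phi-\psi}=c-T\theta$ is the paper's observation that $v(\psi)=\Lambda$ is constant (obtained by differentiating the curvature identity and using $L_v\gamma=0$, which uses $\iota_v\gamma=0$); and your final identification of $c$ from $\int_Z L_v(e^{-T\phi-\psi})=0$ is the paper's computation of $\Lambda$. So the skeleton is right and follows the same route.

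The genuine gap is in the last paragraph, which is where all the difficulty actually lives. You defer the justification of the integrations by parts to ``the argument in the proof of Proposition~\ref{prop:DFformula} verbatim,'' but the proof of that proposition does not multiply by cutoff functions at all --- it is a weak-convergence-of-currents argument as $s\to\infty$, and it provides no information about the regularity of the Ricci potential $u=f$ on the fixed variety $Z$. To carry out the integrations by parts you use (notably $\int_Z f\,L_v\omega_\phi^n=-\int_Z v(f)\,\omega_\phi^n$ and $\int_Z\theta\,\ddb f\wedge\omega_\phi^{n-1}=\int_Z f\,\ddb\theta\wedge\omega_\phi^{n-1}$), one needs quantitative integrability of $u$ and of $|\nabla u|$ near $Z_{sing}$: boundedness below of $u$, the Ding--Tian inequality $\int(1+(u-\inf u))^{-2}|\nabla u|^2\,\omega_\phi^n\le C$ obtained by passing to a resolution $\pi\colon\tilde Z\to Z$ and the metrics $\omega_\epsilon=\pi^*\omega_\phi+\epsilon\eta$, and the resulting $u\in L^p$ for all $p$ and $|\nabla u|\in L^p$ for $p<2$. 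Only with $|\nabla u|\in L^{4/3}$, paired against cutoffs $\chi_\epsilon$ satisfying $\|\nabla\chi_\epsilon\|_{L^4}\le C$ (which uses that $Z_{sing}$ has real codimension at least $4$, not merely $2$), do the boundary terms vanish. This estimate package is the substance of the paper's proof and is not a corollary of Proposition~\ref{prop:DFformula}; citing that proposition as the mechanism leaves the heart of the argument unjustified. Relatedly, your claim that the logarithmic-derivative function differs from $-T\theta$ by a \emph{constant} is stated as if the $\dbar$-closedness on $Z_{reg}$ alone gives constancy; on a normal compact variety one must also control the function near $Z_{sing}$ before concluding it extends and is constant, and this again hangs on the same estimates for $u$.

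A smaller point: you go through $\int_Z f\,\ddb\theta\wedge\omega_\phi^{n-1}$, i.e.\ two integrations by parts moving both derivatives off $f$, whereas the paper only moves one (from $\int\nabla\theta\cdot\nabla u\,\omega_\phi^n$ to $-\int\theta\,\Delta u\,\omega_\phi^n$); given that $|\nabla u|$ is only in $L^p$ for $p<2$, the paper's arrangement is the one that is most directly supported by the available estimates, and the double integration by parts should be handled with that in mind.
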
 
\begin{proof}
 Let us define the (twisted) Ricci potential $u$ on $Z_{reg}$ by
\[ \label{eq:udef} e^{-T\phi-\psi -u} = \omega_\phi^n. \]
Interpreting this as an equality of metrics on $K^{-1}$ (on $Z_{reg}$) and taking
curvatures, we have
\[ \label{eq:ric2}
  T\omega_\phi + \gamma + \ddb u = \mathrm{Ric}(\omega_\phi). \]
Since the current $C\omega_\phi - [\mathrm{Ric}(\omega_\phi) -
\gamma]$ on $Z_{reg}$ is positive, we have $\ddb u \leq
C\omega_\phi$ on $Z_{reg}$. Since the singular set of $Z$ has
codimension at least 2, it follows from this that $u$ is bounded
below. Consider a resolution $\pi:\tilde{Z}\to Z$, and let $\eta$ be a
metric on $\tilde{Z}$. Let $\omega_\epsilon = \pi^*\omega_\phi +
\epsilon\eta$. Then $\omega_\epsilon$ gives a family of
 smooth metrics on $\tilde{Z}$ converging to $\pi^*\omega_\phi$ as
 $\epsilon \to 0$.  Let us denote the pullback of $u$ to $\tilde{Z}$
 by $u$ as well. We have $\ddb u \leq C\omega_\epsilon$
away from the exceptional set, and since $u$ is
bounded below, this inequality holds on all of $\tilde{Z}$. In
particular we have $\Delta_\epsilon u \leq Cn$.  Following
Ding-Tian~\cite{DT}, we integrate the inequality
\[ \int_{\tilde{Z}} \frac{\Delta_\epsilon u}{1 + (u - \inf u)}
\omega_\epsilon^n \leq C \]
by parts to obtain
\[ \int_{\tilde{Z}} \frac{ |\nabla u|_\epsilon^2}{(1 + (u-\inf u))^2}
\, \omega_\epsilon^n \leq C. \]
Letting $\epsilon\to 0$, we obtain the same estimate on $Z_{reg}$ with
the metric $\omega_\phi$. Just as in \cite{DT} we have that $u\in L^p$
for any $p$, and in turn this implies that we have a bound
\[ \int_{Z_{reg}} |\nabla u|^p \omega_\phi^n < C_p, \]
for any $p <2$. 

Differentiating the equation \eqref{eq:udef} along the vector field
$v$ we get that on $Z_{reg}$
\[ -T\theta - v(\psi) - v(u) = \Delta \theta. \]
Note that we can think of $v(\psi)$ as being defined by this equation
(since $\psi$ itself is only defined in local charts), since all other
terms are globally defined functions. In particular by the above
estimate for $u$ we have that $v(\psi)$ is in $L^p$ for $p < 2$. At
the same time, differentiating \eqref{eq:ric2}, and noting that
$L_v\gamma=0$, we get
\[ \ddb\big[ T\theta + v(u) + \Delta\theta\big] = 0, \]
and therefore we also have $\ddb v(\psi) = 0$. In particular
$\Lambda = v(\psi)$ is a constant on $Z$, and so
\[ \label{eq:udiff}
 - T\theta -\Lambda  = \nabla\theta\cdot \nabla u
  +\Delta\theta. \]

Since the integral 
\[ \int_{Z} e^{-T\phi - \psi} \]
is unchanged by flowing along the vector field $v$, we obtain
\[ \int_Z (-T\theta - \Lambda) e^{-T\phi-\psi} = 0. \]
Rearranging this, 
\[  \Lambda = -T\frac{ \int \theta e^{-T\phi- \psi}}{\int e^{-T\phi-\psi}}.
  \]
Using this formula in \eqref{eq:udiff}, and integrating, we get
\[ \label{eq:Fut2} -T\int \theta \omega_\phi^n + TV  \frac{ \int
  \theta e^{-T\phi-\psi}}{\int
  e^{-T\phi-\psi}} = \int (\nabla\theta\cdot
\nabla u + \Delta u) \omega_\phi^n, \]
where all integrals are on $Z_{reg}$. To integrate by parts, note that
since the singular set of $Z$ has real codimension at least 4, we can
find cutoff functions $\chi_\epsilon$ with compact support in
$Z_{reg}$ such that $\chi_\epsilon = 1$ outside the
$\epsilon$-neighborhood of $Z_{sing}$, and $\Vert \nabla\chi_\epsilon\Vert_{L^4} < C$. We
then have
\[ \int_{Z_{reg}} \nabla\theta\cdot \nabla u\, \omega_\phi^n &=
\lim_{\epsilon\to 0} \int \chi_\epsilon \nabla\theta\cdot\nabla
u\,\omega_\phi^n \\
&= \lim_{\epsilon\to 0} \left[ -\int \theta \nabla\chi_\epsilon\cdot
  \nabla u\,\omega_\phi^n - \int \chi_\epsilon \theta\Delta
  u\,\omega_\phi^n\right] \\
&= -\int \theta\Delta u\,\omega_\phi^n, \]
Here we used that $|\nabla u|\in L^{4/3}$, and so 
\[ \left| \int \theta\nabla\chi_\epsilon\cdot\nabla u\,\omega_\phi^n
\right| \leq C \Vert\nabla\chi_\epsilon\Vert_{L^4}
\left(\int_{\mathrm{supp}(\nabla\chi_\epsilon)} |\nabla
  u|^{4/3}\,\omega_\phi^n\right)^{3/4} \to 0\,\text{ as }\epsilon\to
0. \]
Similarly we can check that $\int \Delta u\,\omega_\phi^n = 0$. 
In conclusion, from \eqref{eq:Fut2} we find that
\[ -TV^{-1}\int \theta \omega_\phi^n + T \frac{ \int
  \theta e^{-T\phi -    \psi}}{\int
  e^{-T\phi - \psi}} = -nV^{-1}\int_{Z_reg}
\theta(\mathrm{Ric}(\omega_\phi) - T\omega_\phi - 
  \gamma )\wedge \omega_\phi^{n-1},\]
as required. 
\end{proof}

Suppose now that $Z$ is the central fiber of a special degeneration
for $M$ induced by the one-parameter group $\lambda(t)$. Then
using Proposition~\ref{prop:DFformula}, we can relate the twisted
Futaki invariant to the Donaldson-Futaki invariant as follows. 
\begin{cor}\label{cor:Fut2}
  The twisted Futaki invariant above is given by
  \[ \mathrm{Fut}_{T,\gamma}(Z, v) = DF(M,\lambda) + nV^{-1}\int_Z
    \theta(\gamma - c\omega_\phi)\wedge \omega_\phi^{n-1}, \]
  where $\lambda$ is a $\mathbf{C}^*$-action generated by the vector
  field $v$, and $c$ is a constant so that the right hand side is
  unchanged when we add a constant to the Hamiltonian $\theta$. 
\end{cor}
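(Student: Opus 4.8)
The plan is to identify the two sides of the claimed equality by matching them term-by-term against the formulas already available. The left-hand side is precisely the expression $\mathrm{Fut}_{T,\gamma}(Z,v)$ from \eqref{eq:1}, specialized to the situation where $v$ generates the $\mathbf{C}^*$-action $\lambda$. By the previous Proposition, this quantity equals
\[
  -nV^{-1}\int_Z \theta\bigl(\mathrm{Ric}(\omega_\phi) - T\omega_\phi - \gamma\bigr)\wedge\omega_\phi^{n-1},
\]
where $\omega_\phi$ is the restriction of the Fubini–Study metric to $Z$. So the task reduces to showing that this integral equals $DF(M,\lambda) + nV^{-1}\int_Z\theta(\gamma - c\omega_\phi)\wedge\omega_\phi^{n-1}$ for the appropriate normalizing constant $c$.

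First I would split the integral above into the Ricci term and the remaining terms:
\[
  -nV^{-1}\int_Z\theta\,\mathrm{Ric}(\omega_\phi)\wedge\omega_\phi^{n-1}
  + nV^{-1}\int_Z\theta\bigl(T\omega_\phi + \gamma\bigr)\wedge\omega_\phi^{n-1}.
\]
The key observation is that $\theta$ here is (up to an additive constant) exactly the Hamiltonian generating $\lambda$ on $\mathbf{P}^N$ restricted to $Z$, so the differential-geometric formula of Proposition~\ref{prop:DFformula} applies verbatim: that proposition gives $DF(M,\lambda) = -V^{-1}\int_Z\theta(n\mathrm{Ric}(\omega|_Z) - \hat R\omega)\wedge\omega^{n-1}$ with $\omega = \omega_\phi$. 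Subtracting this from the expression above, the $\mathrm{Ric}$ contributions cancel and one is left with
\[
  DF(M,\lambda) - V^{-1}\int_Z\theta\,\hat R\,\omega_\phi^n
  + nTV^{-1}\int_Z\theta\,\omega_\phi^n
  + nV^{-1}\int_Z\theta\,\gamma\wedge\omega_\phi^{n-1}.
\]
The two middle terms are both constant multiples of $\int_Z\theta\,\omega_\phi^n$, so they combine into a single term of the form $-ncV^{-1}\int_Z\theta\,\omega_\phi^n$ for a suitable constant $c$ (depending on $\hat R$, $T$, and $n$); this is exactly the $-nV^{-1}\int_Z\theta\,c\,\omega_\phi\wedge\omega_\phi^{n-1}$ term appearing in the statement. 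Collecting everything yields the asserted identity, and the value of $c$ is then pinned down by the requirement that the right-hand side be invariant under $\theta\mapsto\theta+\mathrm{const}$, which is an automatic consistency check since the left-hand side manifestly has this invariance.

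The one point requiring care — and the step I expect to be the main obstacle — is the legitimacy of invoking Proposition~\ref{prop:DFformula} in this setting, namely that the relevant integrals over $Z_{reg}$ converge and that the singular locus of $Z$ contributes nothing. For the $\mathrm{Ric}$-containing integral this is handled exactly as in the proof of Proposition~\ref{prop:DFformula}: one uses the uniform upper bound $\mathrm{Ric}(\omega_\phi) < C\omega_\phi$ on $Z_{reg}$ (established in the discussion preceding the previous Proposition) to view $C\omega_\phi - \mathrm{Ric}(\omega_\phi)$ as a positive current whose mass near $Z_{sing}$ is controlled because $Z_{sing}$ has complex codimension at least $2$. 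For the $\gamma$-term one similarly uses that $\gamma$ is a positive current with locally bounded potential $\psi$, together with the boundedness of $\theta$, to see that $\int_{Z_{reg}}\theta\,\gamma\wedge\omega_\phi^{n-1}$ is finite and unaffected by the singularities. Once these convergence and extension issues are dispatched, the algebraic matching of coefficients described above is routine.
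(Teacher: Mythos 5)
Your proof is correct and is precisely the argument the paper expects the reader to supply: the Corollary is stated without proof, being the direct combination of the preceding Proposition with Proposition~\ref{prop:DFformula}, and your coefficient-matching (with $c$ fixed by the invariance requirement, which one can also check cohomologically since $\mathrm{Ric}(\omega_\phi)-T\omega_\phi-\gamma$ represents the zero class) is exactly the intended reasoning. One small imprecision in your final paragraph: the paper does not establish that $\psi$ is locally bounded (Proposition~\ref{prop:limiteq} only gives an $L^1_{loc}$ limit), but the finiteness of $\int_Z\theta\,\gamma\wedge\omega_\phi^{n-1}$ already follows from the boundedness of $\theta$ and the fact that $\gamma$ has finite total mass determined by its cohomology class, so the conclusion stands.
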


\subsection{Completion of the proof of Theorem~\ref{thm:main}}
We can now complete the proof of the main result. According to
Corollary~\ref{cor:notKstable} it is enough to show that either we can
find special degenerations for $M$ with arbitrarily small twisted
Futaki invariant, thereby contradicting the K-stability of
$(M,\beta)$, or $T=1$ and the twisted KE metric that we obtained on
$Z$ is actually the twisted KE metric on $M$ that we set out to find.

Let us denote by $Z\subset\mathbf{P}^N$
the Gromov-Hausdorff
limit of $(M,\omega_{t_k})$ along the continuity path
\eqref{eq:conteq}. Using
Proposition~\ref{prop:limiteq} we know that $Z$ admits a twisted KE
metric. In particular the pair $(Z,\gamma)$ is
in the closure of the $PGL(N+1)$-orbit of $(M, (1-T)\alpha + \beta)$,
where $T=\lim t_k$, and we are identifying $M$ with its image $F_1(M)$. We can now
closely follow the method in \cite{DSz} of approximating the forms $\alpha, \beta$
by currents of integration along divisors in $M$. Just like in
\cite{DSz}, the twisted Futaki invariants become smaller as $T$
increases (see \cite[Equation (23)]{DSz}). Because of this, and to
simplify the discussion below, we will assume that $T=1$. Note that unlike
the setting in \cite{DSz}, here we still have a twisting term when
$T=1$,  and so this case is not any easier than the case $T<1$. 

By assumption, the form $\beta$ on $M$ can be written as an integral of
currents of integration, as in Equation~\eqref{eq:bdecomp}. 
Recall also that we have the sequence $g_k\in PGL(N+1)$ such that $F_k
= g_k\circ F_1$, and so $g_k(M)\to Z$. As in \cite[Lemma 14]{DSz}, by
choosing a subsequence we can 
ensure that each sequence $g_k(D)$ for $D\in |T|$ converges to a subvariety of
$\mathbf{P}^N$ which we denote by $g_\infty(D)$. It follows that we
have
\[ (g_k)_*\beta \to \int_{|T|} [g_\infty(D)]\,
  d\mu(D), \]
in the weak topology. The twisting current
$\gamma$ on $Z$ is obtained as the limit of $(g_k)_*\beta$ as
$k\to\infty$, and so we have
\[ \gamma = \int_{|T|} [g_\infty(D)] \,\mu(D). \]

Arguing as in \cite[Lemma 15]{DSz}, we can find a finite set
$D'_1,\ldots, D'_r\in |T|$ such that the Lie algebra of the stabilizer of
the tuple $(Z, g_\infty(D'_1),\ldots, g_\infty(D'_r))$ in $PGL(N+1)$ is
$\mathfrak{g}_{Z, \gamma}$, and in particular it is reductive. In
addition there is a subset $E\subset |T|$ of measure zero such that if
$D_1,\ldots, D_K\not\in E$, then  the
stabilizer of the extended tuple $(Z, g_\infty(D'_1),\ldots,
g_\infty(D'_r), g_\infty(D_1),\ldots,g_\infty(D_K))$ is still
reductive. Suppose that this tuple is not in the $PGL(N+1)$-orbit of
$(M,D_1',\ldots,D_r', D_1,\ldots, D_K)$. Then we can find a $\mathbf{C}^*$-subgroup
$\lambda_K\subset PGL(N+1)$ and an element $g_K\in PGL(N+1)$ such that
\[ Z&= \lim_{t\to 0} \lambda_K(t) g_K\cdot M,\\
  g_\infty(D'_i) &= \lim_{t\to 0} \lambda_K(t) g_K\cdot D'_i, \text{ for }
  i=1,\ldots, r,\\
  g_\infty(D_j) &= \lim_{t\to 0} \lambda_K(t) g_K\cdot D_j, \text{ for }
  j=1,\ldots, K. \]
Suppose that $\lambda_K$ is generated by a vector field $w_K$, with
Hamiltonian $\theta_K$, and we normalize $\theta_K$ so that it has
zero average on $\mathbf{P}^N$. 
In addition we can scale $w_K$ so that $\Vert \theta_K\Vert_{L^2} = 1$. Note
that since $Z$ is not contained in a hyperplane, the Hamiltonian
$\theta_K$ cannot be constant on $Z$, unless $\lambda_K$ is
trivial. 

We can choose $D_1,\ldots,D_K\in |T|\setminus E$ so that no $d+1$ lie on
a hyperplane in $|T|$. Here $d$ is the dimension
of the projective space $|T|$. From
Proposition~\ref{prop:gw} we have
\[ \lim_{t\to 0} \int_{\lambda_K(t)g_K\cdot M} \theta_K\,
  (\lambda_K(t)g_K)_*\beta\wedge 
  \omega_{FS}^{n-1} &= \frac{1}{K}\sum_{i=1}^K \lim_{t\to 0}
  \int_{\lambda_K(t)g_K\cdot D_i} \theta_K\,\omega_{FS}^{n-1} +
  O(1/K) \\
  &= \frac{1}{K} \sum_{i=1}^K \int_{g_\infty(D_i)}
  \theta_K\,\omega_{FS}^{n-1} + O(1/K), \]
since $d$ is independent of $K$.

At the same time given any $\epsilon > 0$ we can choose $K$ large and
the $D_i$ so that
\[ \frac{1}{K} \sum_{i=1}^K \int_{g_\infty(D_i)}
  \theta_K\,\omega_{FS}^{n-1} \leq \int_Z
  \theta_K\,\gamma\wedge\omega_{FS}^{n-1} + \epsilon. \]
Let us denote by $\gamma_K = \lim_{t\to 0} (\lambda_K(t)g_K)_*\beta$
the limit current on $Z$. 
Combining our inequalities, and the assumption of twisted K-stability, we have
\[ 0 &\leq \mathrm{Fut}_\beta( g_K\cdot M, \lambda_K) = DF(Z,
  \lambda_K) + nV^{-1}\int_Z \theta_K\,(\gamma_K-c\omega_{FS})\wedge
  \omega_{FS}^{n-1} \\
  &= DF(Z, \lambda_K) + nV^{-1}\frac{1}{K} \sum_{i=1}^K \int_{g_\infty(D_i)}
  \theta_K\,\omega_{FS}^{n-1} - cnV^{-1}\int_Z \theta_K\,
  \omega_{FS}^n + O(1/K) \\
  &\leq DF(Z, \lambda_K) + nV^{-1}\int_Z
  \theta_K\,(\gamma-c\omega_{FS})\wedge\omega_{FS}^{n-1} + \epsilon + O(1/K)\\
  &= \epsilon + O(1/K). \]
Note that in the last line we used Proposition~\ref{prop:Fut0} and
Corollary~\ref{cor:Fut2}. Choosing $\epsilon$ small and $K$
sufficiently large, it follows that if the tuples $(Z, g_\infty(D_i'),
g_\infty(D_j))_{i=1,\ldots,r, j=1,\ldots, K}$ are not in the
$PGL(N+1)$-orbit of $(M, D_i', D_j)_{i=1,\ldots,r,j=1,\ldots,K}$ for
infinitely many $K$, then we have special degenerations for
$(M, \beta)$ with arbitrarily small twisted Futaki
invariant. Corollary~\ref{cor:notKstable} then implies that
$(M,\beta)$ is not K-stable. 

Otherwise, $Z$ is in the $PGL(N+1)$-orbit of $M$, and since under our
assumptions $M$ has discrete stabilizer group, it follows that the
group elements $g_k$ are uniformly bounded. As in \cite{DSz}, this
implies that the solutions $\omega_{t_k}$ along the continuity method
satisfy uniform estimates, and so we obtain a solution for $t=T$ as
well, as required.

\bibliography{twistedKE}
\bibliographystyle{amsplain}

\end{document}